\documentclass[reqno]{amsart}
\calclayout
 
\usepackage{mathtools,amssymb,amsfonts,amsmath,latexsym,tikz}
  \usepackage[alphabetic]{amsrefs}
\usepackage{tikz-cd}
\usepackage{hyperref}
\usepackage{enumerate}
\usepackage{mathtools}
\usepackage{physics}
\usepackage{xcolor}

\usepackage[bb=libus]{mathalpha}

\theoremstyle{definition}
\newtheorem{theorem}{Theorem}[section]

\newtheorem{proposition}[theorem]{Proposition}
\newtheorem{lemma}[theorem]{Lemma}
\newtheorem{corollary}[theorem]{Corollary}

\newtheorem{remark}[theorem]{Remark}

\newcommand{\w}{\omega}
\newcommand{\tw}{\tilde{\omega}}
\newcommand{\W}{\Omega}

\newcommand{\p}{\partial}
\newcommand{\bp}{\bar{\partial}}
\newcommand{\pd}{{\partial^{\dagger}}}
\newcommand{\bpd}{{\bar{\partial}^{\dagger}}}

\newcommand{\C}{\mathbb{C}}

\newcommand{\R}{\mathbb{R}}

\newcommand{\D}{\mathcal{D}}
\newcommand{\F}{\mathcal{F}}
\renewcommand{\O}{\mathcal{O}}

\newcommand{\X}{\mathcal{X}}
\newcommand{\Y}{\mathcal{Y}}
\newcommand{\Z}{\mathcal{Z}}

\newcommand{\bbi}{\mathbb{i}}

\renewcommand{\Tr}{\mathrm{Tr}}

\newcommand{\ks}{\mathrm{KS}}

\newcommand{\id}{\mathrm{id}}
\newcommand{\End}{\mathrm{End}}

\newcommand{\im}{\mathrm{Im}}

\allowdisplaybreaks

\begin{document}

\title{The Aeppli Parameter for the Heterotic Moduli}

\author{Pei-Lin Wu}
\date{\today}
\thanks{Department of Mathematics, UBC, 1984 Mathematics Road,
    Vancouver, BC, Canada, \href{peilinwu@math.ubc.ca}{peilinwu@math.ubc.ca}\\Comments are welcome!}

\begin{abstract}
  In this paper, we construct a family of solutions to the $3$-fold Hull-Strominger system using the Aeppli class, without introducing the auxiliary gauge connection on the tangent bundle. In particular, we deform the conformally balanced metric along an Aeppli class off-shell and then tune it by a hermitian $(1,1)$-form dependent on this Aeppli class on-shell to satisfy the anomaly cancellation condition. The existence of the family of solutions is then obtained by the implicit function theorem. This refines the previous work \cite{PW24} by not introducing auxiliary gauge connection, thereby matching the expected dimension with the Bott-Chern parameter. This construction of the Aeppli parameter also extends to the $n$-fold Hull-Strominger system.
\end{abstract}

\maketitle
% \tableofcontents
\section{Introduction and Outline}
Let $X$ be a compact complex 3-fold, with a nowhere vanishing holomorphic $(3,0)$-form $\W$ and a hermitian metric $\w$. Let $E\to X$ be a holomorphic vector bundle with the hermitian metric $h$, the associated Chern connection $A_{h}$ and the Chern curvature $F_{h}$. It is physically motivated to consider the following system of partial differential equations (PDEs):
\begin{align}
    d(||\W||_{\w}\w^2)&=0\quad\text{Conformally balanced condition}\label{eqn:cbc},\\
    F_{h}\wedge \w^2=0,\ F^{0,2}_{h}&=0\quad\text{Hermitian-Yang-Mills conditions}\label{eqn:hym},\\
    \bbi\p\bp\w-\alpha'(\Tr R_{\w}^2 - \Tr F_{h}^{2})&=0\quad \text{Anomaly cancellation condition}\label{eqn:acc}.
\end{align}
This set of PDEs arises from the consideration of the heterotic string theory \cite{CHSW85}, in particular, with the introduction of the flux by \cites{H86,S86}. It is called the Hull-Strominger system or the heterotic system in the literature and generalizes the Calabi-Yau (CY) and Hermitian-Yang-Mills (HYM) equations to a coupled nonlinear system. The parameter space of solutions modulo diffeomorphisms is called the heterotic moduli. 
\begin{remark}
    One has to be slightly careful for the choice of connections. Physically, it is more natural to use the Hull connection, while mathematically easier to use the Chern connection. Since \cite{MP25} has concluded the equivalence of the Chern connection and the Hull connection for the Hull-Strominger system up to order ${\alpha'}^2$, we will use the Chern connection for our heterotic system.
\end{remark}

When $\alpha'=0$, the anomaly cancellation condition reduces to the pluriclosed condition $\p\bp\w=0$. Together with the conformally balanced condition, this leads to $d\omega=0$ by a non-trivial calculation \cite{IP13}. Therefore, we are in the territory of K\"ahler geometry. Given a compact K\"ahler Calabi-Yau 3-fold $X$, since $\w$ is closed, one can associate a K\"ahler class $[\w]\in H^{1,1}_{\mathrm{dR}}(X,\R)$. Then Yau's theorem \cite{Y78} gives the existence and uniqueness of a K\"ahler Ricci-flat metric in the same K\"ahler class $\w_{\mathrm{CY}}\in [\w]$. Then by Donaldson-Uhlenbeck-Yau (DUY) theorem \cites{D85,UY86}, there exists a unique connection $A_{\mathrm{HYM}}$ satisfying HYM conditions on the stable holomorphic bundle $E\to X$ with respect to $\w_{\mathrm{CY}}$. These results combined gave the first existence result of the Hull-Strominger system in the K\"ahler case $\alpha'=0$. 

When $\alpha'\neq 0$, there is non-zero flux, i.e. non-zero torsion, and we venture into the non-K\"ahler realm. We are interested in obtaining the analogue of Yau's theorem and DUY theorem in the non-K\"ahler case. Many works have been done on the existence of the Hull-Strominger system on non-K\"ahler manifolds. Solutions via the large radius expansion were studied in \cite{WW87}. The first existence result of exact solution came from \cite{LY05} via the local perturbation and the implicit function theorem. \cite{FY08} gave an explicit construction on torus fibrations over K3 surfaces, and \cite{FHP17} on torus fibrations over Riemannian surfaces. 

In the perspective of the moduli space, \cites{AGS14,dlOS14a} introduced algebroid structures and described the moduli space via holomorphic structures on an extension bundle. \cites{CdlOM17,CdlOM19} derived the explicit metric with a K\"ahler potential on the heterotic moduli space with the presupposed smooth and complex structure. The physical arguments suggest under suitable conditions, the moduli admits a K\"ahler structure. There is also a recent result \cite{MY26} along this line to compute the heterotic moduli metric up to order ${\alpha'}^{2}$. It is interesting that despite the metric receives corrections from the deformation of the Hull connection mixing the complex structure and the hermitian moduli, the K\"ahler potential obtained in \cite{CdlOM17} remains unchanged.

As for the deformation theory, \cite{AdlOMS-CS18} considered finite deformations of the Hull-Strominger system and reached an $L_{3}$ structure on the moduli. \cites{G-FRST22,G-FRT17} investigated the infinitesimal moduli and give the first construction of the Aeppli parameter for the deformation of the balanced metric satisfying the Hull-Strominger system. Later, the moment map picture of the gauge theory of the extended algebroid was done in \cites{G-FRT20,G-FRT24}. Among those works, the construction of the Aeppli parameter requires extension bundles which introduced the auxiliary gauge connection in the tangent bundle, effectively changing the term $\Tr R_{\w}^2$.

Our main result is a partial analog of Yau's theorem and DUY theorem for the Hull-Strominger system in the non-K\"ahler case. In particular, our result removes the dependency on the auxiliary gauge connection in the tangent bundle.
\begin{theorem}
    Let $E\to X$ be a stable holomorphic vector bundle over a compact complex 3-fold admitting a K\"ahler form $\w$ and a nowhere vanishing holomorphic $(3,0)$-form $\W$, and satisfying the topological constraints $c_{1}(TX)=c_{1}(E)=0$ and $c_2(TX)=c_{2}(E)$. For sufficiently small $\alpha'$, there exists a solution $(\tw,\tilde{h})$ to the Hull-Strominger system whose $\alpha'$-corrected Aeppli class is the same as the Aeppli class of the reference K\"ahler metric $\w$,
    \begin{align}
        [\tw,\tilde{h}]_{\mathrm{A},\alpha'}=[\w]_{\mathrm{A}}\in H_{\mathrm{A}}^{1,1}(X,\R).
    \end{align}
\end{theorem}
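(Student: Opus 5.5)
We follow the perturbative strategy of \cite{LY05} and \cite{PW24}, replacing the Bott-Chern parameter by an Aeppli parameter. We start from the Calabi-Yau data at $\alpha'=0$. By Yau's theorem we may assume the reference K\"ahler form $\w$ is Ricci-flat, so $\|\W\|_{\w}$ is constant. By the DUY theorem, the stable bundle $E$ carries a HYM metric $h_0$.

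We look for solutions of the form
\begin{align}
    \tw=\w+\alpha'\,\Theta(\w,h)+\p\bar{\beta}+\bp\beta,
\end{align}
where $\beta$ is a $(1,0)$-form, so the correction $\p\bar\beta+\bp\beta$ is Aeppli-exact. Here $\Theta$ is a hermitian $(1,1)$-form built from the data of $\w$ and $h$; it is the on-shell tuning term mentioned in the abstract. The $\alpha'$-corrected Aeppli class is then defined as $[\tw,\tilde h]_{\mathrm{A},\alpha'}=[\tw-\alpha'\Theta]_{\mathrm{A}}$. With this definition the ansatz makes the class condition in the statement automatic: $\tw-\alpha'\Theta$ differs from $\w$ by an Aeppli-exact form.

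To choose $\Theta$, we note that the topological hypotheses $c_1=0$ and $c_2(TX)=c_2(E)$ make $\Tr R_{\w}^2-\Tr F_h^2$ a $\p\bp$-exact real $(2,2)$-form, by the $\p\bp$-lemma on the K\"ahler manifold $X$. Its $\p\bp$-primitive can be written through a transgression. For example, with the Chern connections, $\Tr R^2-\Tr F^2=\bbi\p\bp\,\mathrm{CS}$-type term, realised via $\log$ of the metrics against a fixed reference. We set $\Theta$ to be this primitive, normalised by Green's operators so that it depends smoothly on $(\tw,h)$. Then $\bbi\p\bp\Theta=\Tr R^2_{\tw}-\Tr F_h^2$, and the anomaly cancellation condition \eqref{eqn:acc} reduces to $\p\bp(\p\bar\beta+\bp\beta)=0$. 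This holds identically, since $\p\bp\p=\p\bp\bp=0$. Thus \eqref{eqn:acc} becomes a fixed-point-type definition of $\tw$ (since $\Theta$ depends on $\tw$), rather than an extra equation.

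The remaining equations are the conformally balanced condition \eqref{eqn:cbc} and the HYM conditions \eqref{eqn:hym}, with unknowns $\beta$ and $h=h_0e^{s}$. We define a map
\begin{align}
    \Phi(\alpha',\beta,s)=\Big(d^{\dagger}\text{-projection of }d(\|\W\|_{\tw}\tw^2),\ \Lambda_{\tw}F_{h}\Big)
\end{align}
between suitable H\"older spaces. Here $\beta$ is taken modulo $\ker(\p\bar\cdot+\bp\cdot)$, and $s$ is trace-free. We then check three things: $\Phi(0,0,0)=0$; $\Phi$ is $C^1$, which uses that $\Theta$ is smooth in $(\tw,h)$; and the image lies in the natural target, namely exact $5$-forms and trace-free sections.

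We then compute the linearisation at $\alpha'=0$. It decouples. In the $s$-direction it is the Laplacian $\Delta_{h_0}$ on trace-free endomorphisms, which is an isomorphism by stability. In the $\beta$-direction, at the Ricci-flat K\"ahler metric, it is essentially $\beta\mapsto d\big(\w\wedge(\p\bar\beta+\bp\beta)\big)$ corrected by the variation of $\|\W\|$, which is the Hodge star of a Laplacian-type operator. The Hodge theory for Aeppli/Bott-Chern cohomology ($\p\bp$-lemma) shows this is an isomorphism onto exact $5$-forms, after modding out the kernel. The implicit function theorem then yields, for small $\alpha'$, a solution $(\beta(\alpha'),s(\alpha'))$. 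Elliptic regularity makes it smooth.

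The main obstacle is proving surjectivity and injectivity of the $\beta$-linearisation onto the correct target space, with precisely the Aeppli kernel quotiented out. This is the step that must match the expected dimension with the Bott-Chern parameter. A further concern is ensuring that the $\alpha'\Theta$ term, which involves second derivatives of $\tw$ through $R_{\tw}$, does not destroy ellipticity or lose derivatives. For the latter, we would try to show that $\Theta$, being a $\p\bp$-primitive, has the same regularity as $\tw$. The term then enters only as a small perturbation of order $\alpha'$, so the implicit function theorem applies at $\alpha'=0$.
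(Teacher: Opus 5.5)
\textbf{Gap: the invertibility of the $\beta$-linearisation is asserted, not proved.}

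Your architecture is the paper's. The correction $\p\bar\beta+\bp\beta$ is, up to a factor $2$, the paper's $\theta=(1+J)d\xi$ with $\xi=\beta+\bar\beta$. Your $\Theta$ plays the role of the paper's $C_{2}[\tw,\w]-C_{2}[\tilde h,h]-\beta$, where the paper's $\beta$ is the fixed $(1,1)$-form of \eqref{eqn:beta}, not your $(1,0)$-form. You must take $\Theta$ to be exactly this combination: $\p\bp$-primitives are only defined modulo $\ker\p\bp$, which contains Aeppli-nontrivial forms, so another choice changes the class in the statement. The bundle block is the same stability argument.

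The gap is the step you yourself flag, and your justification does not close it. Invertibility of $\beta\mapsto d\big(2\w\wedge\dot\chi-\tfrac12(\Lambda_{\w}\dot\chi)\w^{2}\big)$, with $\dot\chi=\p\bar\beta+\bp\beta$, is a statement about a specific second-order operator, not a cohomological one. The $\p\bp$-lemma tells you neither that every exact $5$-form is hit nor what the kernel is.

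The paper supplies an exact identity (Lemma \ref{lemma-C1} with $n=3$). Write $\dot\chi=(1+J)d\dot\xi$ with $\dot\xi$ real and co-exact. Then use $\delta_{0}\|\W\|_{\chi}=-\tfrac12\|\W\|_{\w}\Lambda_{\w}\dot\chi$, the $2$-form reflection formula, $J\circ d=-d_{c}\circ J$, and the K\"ahler identity $[\Lambda_{\w},d]=-d_{c}^{\dagger}$ (so $\Lambda_{\w}d\dot\xi=-d_{c}^{\dagger}\dot\xi$). This gives
\begin{align*}
    \star_{\w}d\,\delta_{0}\big(\|\W\|_{\chi}\chi^{2}\big)=2\|\W\|_{\w}\,d^{\dagger}d\dot\xi=2\|\W\|_{\w}\Delta\dot\xi .
\end{align*}
The contribution of $\delta_{0}\|\W\|_{\chi}$ cancels exactly the non-Laplacian remainder. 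Only after this does ordinary Hodge theory on $\mathrm{Im}(d^{\dagger})\cap\W^{1}_{\R}X$ give bijectivity.

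The identity also settles two points in your setup. The kernel of $\beta\mapsto\p\bar\beta+\bp\beta$ is the closed real $1$-forms, so ``modulo the kernel'' means restricting to co-exact $\xi$. The correct target is $\star_{\w}d(\|\W\|_{\tw}\tw^{2})\in\mathrm{Im}(d^{\dagger})\cap\W^{1}_{\R}X$. A $d^{\dagger}$-projection of the exact form $d(\|\W\|_{\tw}\tw^{2})$ would vanish identically.

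\textbf{Secondary issue: $\Phi$ is not yet a well-defined map.}

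Your $\Phi(\alpha',\beta,s)$ is not yet defined, because $\tw$ enters $\Theta$ through $\tilde R_{\tw}$ and you only have a fixed-point equation for it. A preliminary implicit-function argument would fix this. The paper's device is cleaner and is its main structural idea:
\begin{itemize}
\item make $u$, with $\tw_{u}=\w+u$, an independent unknown;
\item impose the balanced and HYM equations on $\chi=\w+\theta(\xi)-\alpha'(C_{2}[\tilde h,h]-C_{2}[\tw_{u},\w]+\beta)$;
\item add the equation $\F_{1}=\tw_{u}-\chi$.
\end{itemize}
Since $u$ enters only through $\alpha'C_{2}[\tw_{u},\w]$, all $u$-derivatives other than that of $\F_1$ vanish at $\alpha'=0$. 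So $D_{\mathrm{Y}}\F$ is block triangular with the identity in the $u$-slot, and invertibility reduces to $L_{1}$ and $L_{2}$.

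Your regularity worry resolves as you hoped. In $C_{2}[\tw,\w]=-\bbi\bpd\pd\ks^{-1}(\Tr\tilde R_{\tw}^{2}-\Tr R_{\w}^{2})$ the curvature costs two derivatives, $\ks^{-1}$ gains four, and $\bpd\pd$ costs two. The correction therefore has the regularity of $\tw$, and no derivatives are lost.
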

In the above theorem, the $\alpha'$-corrected Aeppli class is defined in \eqref{eqn:alpha'-corrected Aeppli class}.
\begin{remark}
    The above theorem is not a full analog of Yau's theorem as the question of the uniqueness of the solution is still open. In \cite{G-FRT24}, the condition for the uniqueness of the solution is identified yet very hard to apply. One is left wondering whether the uniqueness can be obtained by some complicated non-linear analysis, or there are indeed multiple solutions in the same Aeppli class away from the K\"ahler point.
\end{remark}
We call a pair of reference K\"ahler metric and HYM connection $(\w,A_{h})$ solving the Hull-Strominger system a K\"ahler point on the heterotic moduli. By Chern correspondence, we can identify $A_{h}$ via $h$ or vice versa, therefore we will also denote the K\"ahler point by $(\w,h)$. We will deform it by an Aeppli class to obtain a deformation path of the metric $\w$ and prove its existence via the implicit function theorem near a K\"ahler point.
\begin{theorem}
    Near a K\"ahler point $(\w,h)$ on the heterotic moduli with fixed complex structure, for sufficiently small $\alpha'$, there exists a path of solutions to the Hull-Strominger system parameterized by the Aeppli class $\mathfrak{a}$
    \begin{align}
        [\tw,\tilde{h}]_{\mathrm{A},\alpha'}&=[\w]_{\mathrm{A}}+\mathfrak{a}\in H_{\mathrm{A}}^{1,1}(X,\R).
    \end{align}
\end{theorem}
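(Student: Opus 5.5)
The plan is an implicit function theorem argument around the K\"ahler point $(\w,h)$ at $\alpha'=0$. The parameters are $(\alpha',\mathfrak{a})$, where $\mathfrak{a}$ ranges over a small neighbourhood of $0$ in the finite-dimensional space $H^{1,1}_{\mathrm{A}}(X,\R)$. The unknowns are a correction to the metric and to the bundle metric.

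\textbf{Ansatz.} Fix harmonic representatives: by Hodge theory for the Aeppli Laplacian with respect to $\w$, choose for each $\mathfrak{a}$ a real $(1,1)$-form $\beta_{\mathfrak{a}}$, linear in $\mathfrak{a}$, with $\p\bp\beta_{\mathfrak{a}}=0$. Because $\w$ is K\"ahler, the $\p\bp$-lemma holds, so we may even take $\beta_{\mathfrak{a}}$ $d$-closed.

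\emph{Off-shell step.} Deform the metric to $\w_{\mathfrak{a}}=\w+\beta_{\mathfrak{a}}$, which is still K\"ahler for small $\mathfrak{a}$.

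\emph{On-shell step.} Write
\begin{align*}
\tw=\w_{\mathfrak{a}}+\alpha'\gamma+\p\bar\mu+\bp\mu ,\qquad \tilde h=h e^{s},
\end{align*}
where $\gamma=\gamma(\mathfrak{a},\alpha',\tw,\tilde h)$ is the hermitian $(1,1)$-form tuned to absorb $\Tr R^2-\Tr F^2$. The pair $(\w_{\mathfrak{a}}+\alpha'\gamma,\ \tilde h)$ is exactly what enters the definition \eqref{eqn:alpha'-corrected Aeppli class}, so the exact term $\p\bar\mu+\bp\mu$ does not change the $\alpha'$-corrected class. Along the resulting path we therefore get
\begin{align*}
[\tw,\tilde h]_{\mathrm{A},\alpha'}=[\w]_{\mathrm A}+\mathfrak{a}
\end{align*}
by construction.

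\textbf{Reformulation.} I would rewrite the conformally balanced condition in the balanced form $d(\|\W\|\tw^2)=0$. Following Li--Yau and \cite{PW24}, I would solve for the $(2,2)$-form $\Psi=\|\W\|_{\tw}\tw^2$ rather than for $\tw$ itself, using the $(n-1)$-th root map. Writing $\Psi=\Psi_0+\p\bp$-type exact corrections keeps the balanced condition automatic. The remaining equations are then:
\begin{itemize}
\item the anomaly equation projected onto $\p\bp$-exact forms, which holds once $\gamma$ is chosen so that $\bbi\p\bp\gamma=\Tr R^2_{\tw}-\Tr F^2_{\tilde h}$ up to exact terms;
\item the HYM equation $\Lambda_{\tw}F_{\tilde h}=0$ on trace-free endomorphisms.
\end{itemize}
Solving for $\gamma$ requires $\Tr R^2-\Tr F^2$ to be $\p\bp$-exact. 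This follows from $c_2(TX)=c_2(E)$, $c_1=0$, and the $\p\bp$-lemma on the K\"ahler manifold $X$, which persists under small deformation of $\tw$ by the Bott--Chern/Aeppli cohomology identification.

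These conditions define a smooth map
\begin{align*}
\F(\alpha',\mathfrak{a},\mu,s): \R\times H^{1,1}_{\mathrm{A}}\times C^{k+2,\beta}\times C^{k+2,\beta}(\mathrm{End}_0E)\to C^{k,\beta}\times C^{k,\beta},
\end{align*}
with $\F(0,0,0,0)=0$ because $\w$ is Calabi--Yau and $h$ is HYM.

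\textbf{Linearization.} At the base point, the derivative in $(\mu,s)$ decouples.
\begin{itemize}
\item The metric part is a Laplacian-type operator, $\p\bp$ composed with $\Lambda$ on the potential. By Hodge theory on the K\"ahler manifold it is an isomorphism onto exact forms modulo the finite-dimensional kernel, which consists of harmonic forms and is fixed by the normalization that the correction be orthogonal to harmonic forms.
\item The bundle part is $s\mapsto \bbi\Lambda_{\w}\bp\p_{h}s$. Since $E$ is stable, hence simple, this is invertible on trace-free endomorphisms.
\end{itemize}
The implicit function theorem then gives, for $(\alpha',\mathfrak{a})$ small, a unique solution $(\mu,s)$ depending smoothly on the parameters, and hence a path of solutions.

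\textbf{Main obstacle.} The hard step is the $\alpha'$-dependence of $\gamma$. The anomaly term $\Tr R_{\tw}^2$ involves second derivatives of $\tw$, hence fourth derivatives of the potential $\mu$. The map $\F$ is then not a lower-order perturbation of the linearization, which threatens loss of derivatives in the H\"older setup.

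I would first try to treat $\alpha'\Tr R^2_{\tw}$ as a term whose linearization in $\mu$ is fourth order but carries the factor $\alpha'$. The full linear operator is then
\begin{align*}
L_0+\alpha' L_1 ,
\end{align*}
and $\F$ is set up between $C^{k+4,\beta}$ and $C^{k+2,\beta}$ via $\p\bp\Lambda$ composed on both sides. Invertibility for small $\alpha'$ then follows by ellipticity of $L_0$ and a perturbation argument, as in Li--Yau. If that fails, I would fall back on solving $\gamma$ through a $\p\bp$-Green operator applied to $\Tr R^2-\Tr F^2$, which gains two derivatives and restores a closed fixed-point problem.
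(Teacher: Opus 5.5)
There is a genuine gap. You combine two incompatible ans\"atze, and the implicit function theorem you actually set up is the Bott--Chern one of \cite{PW24}, which does not give the Aeppli statement.

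Take your ansatz $\tw=\w_{\mathfrak{a}}+\alpha'\gamma+\p\bar\mu+\bp\mu$. For the class identity to hold, ``tuned'' must mean $\gamma=C_{2}[\tw,\w]-C_{2}[\tilde h,h]-\beta$. Then the anomaly cancellation condition is automatically satisfied, but the conformally balanced condition is not. You cannot also make the latter automatic by prescribing $\|\W\|_{\tw}\tw^{2}=\Psi_{0}+(\p\bp\text{-exact})$. Since $\tw$ is determined by this $(2,2)$-form, that prescription fixes the Bott--Chern class instead. The Aeppli identity is then no longer ``by construction''. Imposing both overdetermines the problem, because the Bott--Chern class matching $\mathfrak{a}$ is not known in advance.

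If instead you keep the Aeppli ansatz and use the anomaly equation to solve for $\mu$, the IFT fails outright. Since $\bbi\p\bp(\p\bar\mu+\bp\mu)=0$, the $\mu$-linearization of that equation at $\alpha'=0$ vanishes identically. The operator ``$\p\bp\Lambda$ on the potential'' that you describe is $L_{1}^{\mathrm{BC}}$ from the Bott--Chern setup, not the linearization of your map.

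The missing idea is that in the Aeppli parametrization the exact part is determined by the balanced equation. The paper writes it as $\theta=(1+J)d\xi$ with $\xi\in\im(d^{\dagger})\cap\W^{1}_{\R}X$. This restriction also removes the gauge directions $\xi=df$, for which $\theta=0$. The paper then imposes $\star_{\w}d(\|\W\|_{\chi}\chi^{2})=0$. Using $\delta_{0}\|\W\|_{\chi}=-\frac{1}{2}\|\W\|_{\w}\Lambda_{\w}\dot\chi$, the K\"ahler identities and the $2$-form reflection formula, it shows that the linearization is $2\|\W\|_{\w}\Delta$. This operator is invertible on $\im(d^{\dagger})\cap\W^{1}_{\R}X$.

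A second gap is that the self-referential $\gamma(\ldots,\tw,\ldots)$ is never turned into a well-posed equation. The paper decouples it as follows.
\begin{itemize}
\item The curvature is taken from an independent unknown $\tw_{u}=\w+u$.
\item The form $\chi$ carrying the Aeppli class contains $\alpha'C_{2}[\tw_{u},\w]$.
\item The anomaly condition is imposed as the zeroth-order equation $\tw_{u}-\chi=0$, whose $u$-derivative at the base point is $\id$.
\end{itemize}
Since all $u$-dependence of $\chi$ carries a factor $\alpha'$, $D_{\mathrm{Y}}\F$ is block triangular with diagonal $(\id,L_{1},L_{2})$.

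Your first plan for the derivative loss does not work. The IFT only needs invertibility at $\alpha'=0$, where the $\Tr R^{2}$ term already drops out of the linearization. The actual issue is whether $\F$ is a $C^{1}$ map between the chosen H\"older spaces. An $\alpha'$-multiple of a higher-order operator is not bounded between the spaces adapted to $L_{0}$, however small $\alpha'$ is. Your fallback is the right fix, and it is what the paper does: $C_{2}=-\bbi\bpd\pd\,\ks^{-1}(\cdot)$. Here $\ks^{-1}$ gains four derivatives and $\bpd\pd$ costs two, which exactly compensates the two derivatives lost in the curvature.
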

The above theorem can be interpreted as a local description of the heterotic moduli near a K\"ahler point where the Aeppli class $\mathfrak{a}$ is a local coordinate. Moreover, the tangent space of the partial heterotic moduli with a fixed complex structure and a fixed holomorphic structure $\bar{\D}_0$ at a K\"ahler point is isomorphic to the Aeppli cohomology $H^{1,1}_{\mathrm{A}}(X,\R)$. In particular, since no auxiliary gauge connection of the tangent bundle is introduced, this agrees with the expected dimension \cite{PW24} of the partial heterotic moduli given by the Bott-Chern cohomology $H^{2,2}_{\mathrm{BC}}(X,\R)$ via the Poincar\'e-type duality between the Aeppli and Bott-Chern cohomologies. 

If we further assume the unobstructedness of the deformation of the stable holomorphic bundle $E\to X$, then we can also deform the holomorphic structure $\bar{\D}_{0}$ of the bundle smoothly to obtain a family of holomorphic structure $\bar{\D}_{\alpha}$. Then the above theorem extends to a joint deformation of the metric and bundle connection parameterized by the Aeppli class and holomorphic class respectively. 
\begin{theorem}
    Near a K\"ahler point $(\w,h)$ on the heterotic moduli with fixed complex structure, for sufficiently small $\alpha'$, there exists a family of solutions to the Hull-Strominger system parameterized by the Aeppli class $\mathfrak{a}\in H^{1,1}_{\mathrm{A}}(X,\R)$ and the holomorphic class $\alpha\in H^{1}(X,\End E)$.
\end{theorem}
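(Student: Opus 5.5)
We plan to run the implicit function theorem on a joint map in the metric variable and the bundle variable, reusing the construction behind the previous theorem (the path parameterized by $\mathfrak{a}$) and adding the bundle parameter $\alpha$. First, by the assumed unobstructedness of the deformations of $E$, we choose a smooth family of integrable holomorphic structures $\bar{\D}_\alpha=\bar{\D}_0+\beta(\alpha)$, where $\alpha\in H^1(X,\End E)$ is small and $\beta(\alpha)$ is $\bar{\D}_0$-harmonic to first order, with $\beta(\alpha)^{0,1}$ satisfying $\bar{\D}_0\beta+\beta\wedge\beta=0$. Stability is an open condition, so for $\alpha$ small each $\bar{\D}_\alpha$ remains stable with respect to $\w$ and with respect to any hermitian metric $\tw$ whose Gauduchon representative lies $C^{k,\gamma}$-close to $\w^2$. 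The unknowns are then a hermitian metric of the form
\begin{align*}
\tw=\w+\text{(Aeppli representative of }\mathfrak{a})+\p\bar{\mu}+\bp\mu+\alpha'\,\theta(\mathfrak{a},\alpha,\cdot),
\end{align*}
together with a hermitian endomorphism $s$ giving $\tilde h=h e^{s}$ on $(E,\bar{\D}_\alpha)$. Here $\theta$ is the hermitian $(1,1)$-correction used on-shell in the previous theorem, which depends on the Aeppli class and now also on the bundle curvature term.

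Next, we set up the operator
\begin{align*}
\mathcal{P}(\alpha',\mathfrak{a},\alpha;\,\mu,s)=\Big(d(\|\W\|_{\tw}\tw^2),\ \Lambda_{\tw}F_{\tilde h,\alpha},\ \bbi\p\bp\tw-\alpha'(\Tr R_{\tw}^2-\Tr F_{\tilde h,\alpha}^2)\Big),
\end{align*}
acting between suitable H\"older spaces. The metric unknowns are taken modulo the Aeppli class, and $s$ is taken trace-free and $L^2$-orthogonal to constants. The $F^{0,2}$ condition holds automatically because $\bar{\D}_\alpha$ is integrable. At $(\alpha',\mathfrak{a},\alpha)=(0,0,0)$ the K\"ahler point is a zero of $\mathcal{P}$. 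The linearization in $(\mu,s)$ is block triangular. The bundle block is the Laplacian $\bar{\D}_0^\dagger\bar{\D}_0$ twisted on $\End E$, which is invertible on trace-free sections by stability (simplicity) of $E$. The metric block is the same operator already shown to be an isomorphism in the proof of the previous theorem. The anomaly equation is then handled at $\alpha'\neq0$ through the Aeppli correction $\theta$, exactly as before, with the extra source $\alpha'\Tr F^2_{\tilde h,\alpha}$. The implicit function theorem then gives $(\mu,s)$ depending smoothly on $(\alpha',\mathfrak{a},\alpha)$ for small parameters.

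The main obstacle is the cohomological solvability of the anomaly equation once the bundle moves. We must check that $\Tr R_{\tw}^2-\Tr F_{\tilde h,\alpha}^2$ stays $\p\bp$-exact, that is, zero in $H^{2,2}_{\mathrm{BC}}$, uniformly in $\alpha$. The plan is to use the Bott--Chern transgression: changing $h$ or the holomorphic structure by a smooth family changes $\Tr F^2$ by $\p\bp$ of a secondary form plus a $d$-exact correction. Because the family $\bar{\D}_\alpha$ is holomorphic in the parameter and all the $\bar{\D}_\alpha$ share the topological class $c_2(E)=c_2(TX)$, we expect this correction to be $\p\bp$-exact on the $\p\bp$-lemma K\"ahler manifold $X$. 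The $\p\bp$-lemma is stable under small deformations of the metric, and the complex structure is fixed. If the transgression is only $d$-exact, we would project onto the image of $\p\bp$ using the Aeppli/Bott--Chern duality, and absorb the remainder into $\theta$, so that the $\alpha'$-corrected Aeppli class $[\tw,\tilde h]_{\mathrm{A},\alpha'}=[\w]_{\mathrm{A}}+\mathfrak{a}$ is preserved.
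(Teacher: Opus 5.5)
Your proposal has a genuine gap in how the anomaly cancellation equation enters the implicit function theorem, which is the one nontrivial point of this construction.

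As written, the third component of $\mathcal{P}$, namely $\bbi\p\bp\tw-\alpha'(\Tr R_{\tw}^2-\Tr F_{\tilde h,\alpha}^2)$, has zero derivative in your unknowns $(\mu,s)$ at $(\alpha',\mathfrak{a},\alpha)=(0,0,0)$. Indeed, $\bbi\p\bp$ annihilates $\p\bar{\dot\mu}+\bp\dot\mu$, and every other term carries a factor $\alpha'$. Hence $D_{(\mu,s)}\mathcal{P}$ is not onto. ``Handling the anomaly equation at $\alpha'\neq 0$ through $\theta$'' is not a step the implicit function theorem allows, since invertibility is needed at $\alpha'=0$.

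Suppose instead $\alpha'\theta$ is meant to be $-\alpha'(C_2[\tilde h,h]-C_2[\tw,\w]+\beta)$, with $\beta$ the $(1,1)$-form satisfying $\bbi\p\bp\beta=\Tr F_h^2-\Tr R_\w^2$. Then the anomaly equation holds identically and should be dropped from $\mathcal{P}$. But now $\tw$ appears on both sides of its own ansatz, through $C_2[\tw,\w]$ and through the unknown $\tilde h$. You have not said how this implicit equation is solved.

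The missing idea is the paper's off-shell decoupling:
\begin{itemize}
\item Take $u\in C^{k,\gamma}(\W^{1,1}_{\R}X)$ as an additional \emph{independent} unknown.
\item Set $\tw_u=\w+u$ and $\chi=\w+\mathfrak{a}+(1+J)d\xi-\alpha'(C_2[\tilde h,h]-C_2[\tw_u,\w]+\beta)$.
\item Use $\tw_u-\chi$ as the first component of $\F$.
\end{itemize}
Its $u$-derivative is the identity. So at the K\"ahler point $D_{\mathrm{Y}}\F$ is block upper-triangular, with diagonal blocks $\id$, $L_1=2\|\W\|_{\w}\Delta$ on $\mathrm{Im}(d^{\dagger})\cap\W^1_{\R}X$, and $L_2$. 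Moreover $\bbi\p\bp\chi=\alpha'(\Tr R_{\tw_u}^2-\Tr\tilde F_{\tilde h}^2)$ holds identically. Therefore the anomaly condition holds on $\F^{-1}(0)$, where $\tw_u=\chi$.

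For the present theorem, the paper then only enlarges the parameter space to $\R\times H^{1,1}_{\mathrm{A}}(X)\times H^1(X,\End E)$ and computes $\tilde F$ from $(\bar\D_\alpha,he^v)$. It observes that $D_{\mathrm{Y}}\F$ at the origin is unchanged.

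Second, the ``main obstacle'' you single out is not one, and your fallback would not be valid. Since $\bar\D_\alpha$ is integrable, the Chern curvature of $(\bar\D_\alpha,\tilde h)$ has type $(1,1)$. So $\Tr\tilde F_{\tilde h}^2-\Tr F_h^2$ is a real $(2,2)$-form that is $d$-exact by Chern--Weil. It is therefore $\p\bp$-exact by the $\p\bp$-lemma on the compact K\"ahler manifold $X$; this is a property of the fixed complex structure, not of any metric. The same applies to $\Tr R_{\tw}^2-\Tr R_\w^2$, and $\beta$ exists because $c_1=0$ and $c_2(TX)=c_2(E)$. This is exactly what makes the $\ks^{-1}$-construction of $C_2$ in the appendix well-defined, uniformly in $\alpha$.

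Projecting onto $\mathrm{Im}(\p\bp)$ and ``absorbing the remainder into $\theta$'' could not work anyway. The form $\bbi\p\bp\tw$ is $\p\bp$-exact for every $\tw$, so a right-hand side with nonzero Bott--Chern class admits no solution.

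Finally, openness of stability under perturbations of $\w$ is not needed. The argument only uses invertibility of $L_2$ at $(\w,\bar\D_0)$.
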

This can be interpreted as a local description of the heterotic moduli near a K\"ahler point where the Aeppli class $\mathfrak{a}$ and the holomorphic class $\alpha$ are local coordinates. Moreover, the tangent space of the partial heterotic moduli with fixed complex structure near a K\"ahler point is isomorphic to $H^{1,1}_{\mathrm{A}}(X,\R)\oplus H^{1}(X,\End E)$, which agrees with the expected dimension given by the Bott-Chern cohomology $H^{2,2}_{\mathrm{BC}}(X,\R)\oplus H^{1}(X,\End E)$ via the Poincar\'e-type duality between the Aeppli and Bott-Chern cohomologies.

\textbf{Outlines:} This paper is structured as follows. In section \ref{sec:1} we quickly review the two natural cohomology classes related to the Hull-Strominger system, with emphasis on the Aeppli cohomology and the construction of the $\alpha'$-corrected Aeppli class. In section \ref{sec:2} we motivate and give the joint deformation ansatz with the Aeppli parameter $\mathfrak{a}\in H^{1,1}_{\mathrm{A}}(X)$. In section \ref{sec:3} we set up the implicit function theorem with appropriate spaces and calculate the linearization and conclude the main theorem, with a brief discussion on the bundle moduli with the unobstructedness assumption where the bundle parameter $\alpha\in H^{1}(X,\End E)$ enters in the joint deformation. In section \ref{sec:4} for a streamlined comparison, we give a brief discussion of the Bott-Chern parameter, and the duality with the Aeppli parameter. We end the paper with some immediate open questions and future directions in section \ref{sec:5}. In the appendix \ref{App:B}, we give a brief discussion of the canonical construction of the Bott-Chern secondary characteristics and the smoothness of the implicit function. In the appendix \ref{App:A}, we give some useful lemmas of the reflection formula in $n$-fold for $1$-forms and $2$-forms. In the appendix \ref{App:C}, we will give a brief discussion of the extension of the main theorem to the $n$-fold Hull-Strominger system.

\textbf{\textit{Acknowledgement:}} The author benefits from many discussions with S\'ebastien Picard in the process of formulating and finishing this paper. The author also thanks Mario Garcia-Fernandez for comments on the extension bundles during a conference at Tsinghua, and Jock McOrist for discussions about the physical significance of the Hull connection and interesting results on the protected K\"ahler potential.

\section{Aeppli Class}\label{sec:1}
\subsection{Two natural cohomology classes}
By different partitions of the Hull-Strominger system, we arrive at two natural choices of the cohomology classes to parametrize the heterotic moduli, namely:
\begin{enumerate}
    \item If we would like to make an ansatz that satisfies the conformally balanced condition\footnote{If one include the joint deformation of the bundle, then one needs to include the partial HYM condition $\tilde{F}_{\tilde{h}}^{0,2}=0$ as well.},
    \begin{align}
        d(||\W||_{\tw}\tw^2)=0,
    \end{align}
    we naturally arrive at the Bott-Chern class 
    \begin{align}
        [||\W||_{\tw}\tw^2]_{\mathrm{BC}}\in H_{\mathrm{BC}}^{2,2}(X,\R)=\frac{\ker(d)\cap \W^{2,2}_{\R}X}{\Im(\p\bp)}.
    \end{align}
   Then the implicit function theorem can be applied to study the vanishing locus of the remaining equations. This is the approach taken in \cites{CPY22,PW24}.
    \item If we would like to make an ansatz that satisfies the anomaly cancellation condition\footnote{Technically speaking, one needs to include the partial HYM condition $\tilde{F}_{\tilde{h}}^{0,2}=0$ to make the form type consistent in the ansatz. But we suppress this to focus on the parallel comparison here.},
    \begin{align}
        \bbi\p\bp\tw-\alpha'(\Tr \tilde{R}_{\tw}^2 - \Tr \tilde{F}_{\tilde{h}}^{2})=0,
    \end{align}
    when $\alpha'=0$, we naturally arrive at the Aeppli class
    \begin{align}
        [\w]_{\mathrm{A}}\in H_{\mathrm{A}}^{1,1}(X,\R)=\frac{\ker(\p\bp)\cap \W^{1,1}_{\R}X}{\Im(\p\oplus\bp)}.
    \end{align}
    But when $\alpha'\neq0$, one needs a careful construction of the Aeppli class, since away from the K\"ahler point $\tw\not\in\ker(\p\bp)$. In general, in the following subsection, we will use the Bott-Chern secondary characteristics to construct the $\alpha'$-corrected Aeppli class. Then the implicit function theorem can be applied to study the vanishing locus of the remaining equations. This is the approach taken in \cites{G-FRST22,PW24} but with the introduction of the auxiliary gauge connection in the tangent bundle, and thus modifying the $\Tr R_{\tw}^2$ term.
\end{enumerate}

\subsection{$\alpha'$-Corrected Aeppli class}
We will work with the Aeppli cohomology defined by
\begin{align}
    H^{1,1}_{A}(X)&=\frac{\ker(\p\bp)\cap \W^{1,1}X}{\mathrm{Im}(\p\oplus\bp)}.
\end{align}
The Aeppli class of any pluriclosed $(1,1)$-form $\tw$ is denoted by $[\tw]_{\mathrm{A}}$. In particular, we can consider the Aeppli class of the K\"ahler metric $\w$, denoted by $[\w]_{\mathrm{A}}$. Since $\w$ is K\"ahler, we have $\p\bp\w=0$, and thus $[\w]_{\mathrm{A}}$ is well-defined. 

However, for a general solution $(\tw,\tilde{h})$ to the Hull-Strominger system, $\tw$ is not pluriclosed by the anomaly cancellation condition, and we need a careful construction to define its Aeppli class. This leads to the following definition of the $\alpha'$-corrected Aeppli class $[\tw,\tilde{h}]_{\mathrm{A},\alpha'}$,
\begin{align}\label{eqn:alpha'-corrected Aeppli class}
    [\tw,\Tilde{h}]_{\mathrm{A},\alpha'}&:=\qty[\tw-\alpha'C_{2}(\tw,\w)+\alpha' C_{2}[\Tilde{h},h]+\alpha'\beta]_{\mathrm{A}}\in H^{1,1}_{\mathrm{A}}(X),
\end{align}
where $C_{2}$'s are the Bott-Chern secondary characteristics defined by
\begin{align}
    \bbi\p\bp C_{2}[\tw,\w]&=\Tr \Tilde{R}_{\tw}^{2}-\Tr R_{\w}^2,\\
    \bbi\p\bp C_{2}[\Tilde{h},h]&=\Tr \Tilde{F}_{\Tilde{h}}^{2}-\Tr F_{h}^2,
\end{align}
and $\beta\in \W^{1,1}(X)$ such that
\begin{align}\label{eqn:beta}
    \bbi\p\bp\beta= \Tr F_{h}^{2}-\Tr R_{\w}^{2}.
\end{align}
We may also take the real part if necessary since $\bbi\p\bp\Im(\beta)=0$. The canonical construction of the Bott-Chern secondary characteristics is briefly discussed in the Appendix \ref{App:B}. By construction,
\begin{align}\label{aeppli}
    \bbi\p\bp[\tw,\Tilde{h}]_{\mathrm{A},\alpha'}=0.
\end{align}
In other words, if $(\tw,\tilde{h})$ is a solution to the Hull-Strominger system, then the $\alpha'$-corrected Aeppli class $[\tw,\tilde{h}]_{\mathrm{A},\alpha'}$ is well-defined with respect to the reference $(\w,h)$. Moreover, one can show that it is independent of the choice of the reference $(\w,h)$ \cite{P24}. 

\subsection{Duality} As aforementioned, there is a duality between the Aeppli cohomology $H^{1,1}_{\mathrm{A}}$ and Bott-Chern cohomology $H^{2,2}_{\mathrm{BC}}$ via the pairing analogue to the Poincar\'e duality:
\begin{align}\label{eqn:Poincare-Duality}
    H^{1,1}_{\mathrm{A}}(X,\C)\times H^{2,2}_{\mathrm{BC}}(X,\C)&\to \C, \nonumber\\
    ([\tw]_{\mathrm{A}},[||\W||_{\tw}\tw^2]_{\mathrm{BC}})&\mapsto\int_{X}[\tw]_{\mathrm{A}}\wedge[||\W||_{\tw}\tw^2]_{\mathrm{BC}}.
\end{align}
Due to this, it is natural to give a deformation ansatz of the balanced metric using the Bott-Chern parameter, which was investigated in \cite{PW24}. The duality suggests two parameterization of Aeppli and Bott-Chern classes should be of the same dimension. This requires one to give Aeppli parametrization without introducing the auxiliary gauge connection. 

This motivates us to investigate a refined description of the Aeppli deformation without introducing the auxiliary gauge connection. Once we establish it, we will give a brief parallel comparison between the Aeppli class and the Bott-Chern class approaches to the heterotic moduli, and use this duality to conclude the dimension matching of the local heterotic moduli near a K\"ahler point.

\section{Deformation Ansatz}\label{sec:2}
\subsection{Aeppli deformation}
We consider the following Aeppli deformation ansatz for the metric $\tw$ and the hermitian metric $\tilde{h}$ such that
\begin{align}
    [\tw]_{\mathrm{A},\alpha'} = [\w]_{\mathrm{A}}+\mathfrak{a},
\end{align}
or equivalently,
\begin{align}
    \tw = \w+\mathfrak{a}+\theta-\alpha'(C_{2}[\tilde{h},h]-C_2[\tw,\w]+\beta),
\end{align}
where $\theta\in\Im(\p\oplus\bp)\cap \W^{1,1}_{\R}(X)$. 

This has been investigated in the previous work \cite{PW24}. However, our previous approach and many other works \cites{G-FRST22,G-FRT24} requires the introduction of an auxiliary gauge connection $\gamma$ on $T^{1,0}X$ and the modified anomaly cancellation relation
\begin{align}
    \bbi \p\bp\tw = \alpha'(\Tr \tilde{R}_{\gamma}^2 - \Tr \tilde{F}_{\tilde{h}}^2),
\end{align}
for suitable setup of the implicit function theorem. This effectively enlarges the heterotic moduli.

It was noticed in \cite{P24} that the above modified ansatz indeed admits solutions to $\tilde{\w}$ via a two-step implicit function theorem argument, indicating the possibility of refining the deformation to absorb the dependence of $\gamma$ and hence removing the dependency of this auxiliary gauge connection. 

We here provide the conjectured refinement in a one-step implicit function theorem argument. The main idea is to decouple and allow a joint $\W^{1,1}_{\R}X$-deformation $u$ and Aeppli-deformation $\mathfrak{a}$ off-shell. But here the $\W^{1,1}_{\R}$-deformation $u(\mathfrak{a})$ will turn out to be solely dependent on the Aeppli deformation (and on the bundle deformation) once the solution exists. This dependence is determined on-shell while preserving the anomaly cancellation condition, so no additional degrees of freedom are introduced.

The deformation path is given by the following: take $u\in\W^{1,1}_{\R}X$, deform the metric by
\begin{align}
    \tw_{u}=\w+u,
\end{align}
This hermitian $(1,1)$-form $\tilde{\w}_{u}$ is not solving the anomaly cancellation condition in general. But if we require its $\alpha'$-corrected Aeppli class to be in the same Aeppli class of $[\w]_{\mathrm{A}}$ on-shell up to a difference $\mathfrak{a}\in H^{1,1}_{\mathrm{A}}(X)$, then it will solve the anomaly cancellation equation. In other words, we require the following on-shell condition for the deformation path:
\begin{align}
    [\tilde{\w}_{u}]_{\mathrm{A},\alpha'}=[\w]_{\mathrm{A}}+\mathfrak{a},
\end{align}
or equivalently,
\begin{align}
    \w+u=\w+\mathfrak{a}+\theta-\alpha'(C_{2}[\tilde{h},h]-C_{2}[\tw_{u},\w]+\beta).
\end{align}
Therefore, $u$ is not an independent parameter on-shell, but depends on the Aeppli parameter $\mathfrak{a}$ as the follows, 
\begin{align}\label{u(a)}
    u(\mathfrak{a})=\mathfrak{a}+\theta-\alpha'(C_{2}[\tilde{h},h]-C_{2}[\tw_{u},\w]+\beta).
\end{align}
It is worth noting that the above equation \eqref{u(a)} is a non-linear PDE for $u$, and the solution $u(\mathfrak{a})$ is solely dependent on the Aeppli parameter $\mathfrak{a}$ on-shell. One can use the implicit function theorem to show the existence and uniqueness of $u(\mathfrak{a})$ for sufficiently small $\mathfrak{a}$ \cite{P24}. Therefore, the deformation path is effectively parameterized by the Aeppli parameter $\mathfrak{a}$. The Aeppli parameter $\mathfrak{a}$ provides a moduli coordinate locally around a K\"ahler point. 

For later setup of the implicit function theorem, it is convenient to decouple the above equation \eqref{u(a)} by defining:
\begin{align}
    \tw_{u}&=\w+u,\\
    \chi&=\w+\mathfrak{a}+\theta-\alpha'(C_{2}[\tilde{h},h]-C_{2}[\tw_{u},\w]+\beta),
\end{align}
and $\theta\in \im(\p\oplus\bp)\cap \W^{1,1}_{\R}(X)$. Then the on-shell condition is equivalent to $\tw_{u}-\chi=0$, which means exactly the anomaly cancellation condition is satisfied. After the application of the implicit function theorem, then we can therefore describe the vanishing locus, i.e. partial heterotic moduli, with the Aeppli parameter $\mathfrak{a}$ as a local coordinate. Effectively, we no longer need to introduce the auxiliary gauge connection $\gamma$ on $T^{1,0}X$ and extend the bundle $E$. 

We now take $\theta=\p\eta^{0,1}+\bp\gamma^{1,0}$ for some $\eta^{0,1}$ and $\gamma^{1,0}$. The hermitian condition $\theta=\bar{\theta}$ requires that $\overline{\eta^{0,1}}=\gamma^{1,0}$. Due to this hermitian condition, as noticed in \cite{G-FRST22}, one can write $\theta$ in a more compact form by some $1$-form $\xi$ as
\begin{align}
    \theta=(1+J)d\xi=2(d\xi)^{1,1},\quad \xi\in\Im(d^{\dagger})\cap\W^{1}_{\R}X,
\end{align}
where $J$, by abusing the notation, is the induced operator on $k$-forms by almost complex structure $J$,
\begin{align}
    J=\sum_{p+q=k}\bbi^{p-q}\Pi^{p,q},
\end{align}
and here $\Pi^{p,q}:\W^{k}X\to \W^{p,q}X$ is the type projection operator from $k$-forms to $(p,q)$-forms.

\subsection{Bundle deformation}
We now specify the deformation of the bundle connection from the parameter space $H^{1}(X,\End E)$ in a way that the deformed curvature $\tilde{F}_{\tilde{h}}$ automatically satisfies part of the Hermitian-Yang-Mills condition
\begin{align}
    \tilde{F}_{\tilde{h}}^{0,2}=0.
\end{align}

Given $E\to X$ a vector bundle with reference hermitian metric $h$, let $\bar{\D}$ be a holomorphic structure on $E$. By the Chern correspondence, the operator $\bar{\D}$ uniquely determines a connection $D=\D+\bar{\D}$ which is metric compatible with respect to $h$. 

We deform the metric by
\begin{align}
    \tilde{h}=he^{v},\quad v\in\Gamma(\End E),\ v^{\dagger}=v,
\end{align}
where $\dagger$ denotes the adjoint with respect to $h$. For later convenience, we will denote $v\in \Gamma_{\R}(\End E)$. 

The associated curvature then reads
\begin{align}
    \tilde{F}_{\tilde{h}} = \bp(\tilde{h}^{-1}\p \tilde{h}),\quad \tilde{h}=he^{v}.
\end{align}

\section{Implicit Function $\F$ and its validity}\label{sec:3}
We will first investigate the deformation of balanced metric via the Aeppli class with a fixed complex structure and a fixed holomorphic class. Then with the unobstructedness assumption, the construction can be extended to the joint deformation of the balanced metric and bundle connection.

For our purpose, we need the implicit function theorem that works for infinite dimensional Banach space. We will apply the implicit function theorem (IFT) appeared in \cite{MS12}.

\subsection{Setup}
We aim to study the neighborhood of a K\"ahler point on the slice of the heterotic moduli with a fixed complex structure and a fixed bundle connection. First, we fix background data. Let $\W$ be a holomorphic volume form. Let $\w$ be a K\"ahler Ricci-flat metric and let $h$ be a HYM metric on $E$ with respect to $\w$. Then, we consider the following spaces:
\begin{align}
    \F &:\X\times \Y\to \Z,\nonumber\\
    \X &= \R \times H^{1,1}_{\mathrm{A}}(X),\\
    \Y &= \W^{1,1}_{\R}X \times (\Im(d^{\dagger})\cap\W^{1}_{\R}X) \times \Gamma_{\R}(\End_0 E),\\
    \Z &= \W^{1,1}_{\R}X \times (\Im(d^{\dagger})\cap\W^{1}_{\R}X) \times V(E),
\end{align}
where $\W^{1,1}_{\R}X$ is the space of hermitian (1,1)-form, $\Gamma_{\R}(\End_0 E)=\{u\in \Gamma_{\R}(\End E):\Tr (u)=0,u^{\dagger}=u\}$ and $V(E)$ is the trace-free, self-adjoint $\mathrm{End}E$-valued-6-form,
\begin{align}
    V(E):=\{v\in\W^{6}(\End E):\int_{X}\Tr(v)=0, v^{\dagger}=v\}.
\end{align}
\begin{remark}
    Here the $\End_{0}E$ is needed since the ansatz is not sensitive to shifts $v\mapsto v+C\id_{E}$ by constant multiples of identity.
\end{remark}
With the spaces, we denote the moduli parameters $\mathrm{X}=(\alpha',\mathfrak{a})\in\X$ and the cohomology equivalence class coordinates $\mathrm{Y}=(u,\xi,v)\in\Y$. 

We now consider the function
\begin{align}
    \F(\mathrm{X},\mathrm{Y})=\mqty[
    \tw_{u}-\chi\\
    \star_{\w}d(||\W||_{\chi}\chi^2)\\
    ||\W||_{\chi}e^{\frac{v}{2}}\qty(\chi^2\wedge i\Tilde{F}_{v})e^{-\frac{v}{2}}-C\chi^3\otimes \mathrm{id}
    ]=\mqty[\F_1\\ \F_2\\ \F_3] \in \Z,
\end{align}
with the normalization constant
\begin{align}
    C=\frac{1}{\mathrm{Rk}(E)}\frac{\int_{X}||\W||_{\chi}\chi^2\wedge \Tr (i\Tilde{F}_{\Tilde{h}})}{\int_{X}||\W||_{\chi}\chi^3},
\end{align}
where 
\begin{align}
    \tw_{u}&=\w+u,\\
    \chi&=\w+\mathfrak{a}+\theta(\xi)-\alpha'(C_{2}[\tilde{h},h]-C_{2}[\tw_{u},\w]+\beta),\quad \theta(\xi)=(1+J)d\xi,\\
    \tilde{h}&=he^{v}.
\end{align}
Note that $\F(\mathrm{X},\mathrm{Y})=0$ is exactly the constraint equation that the deformed $(\tw,\tilde{h})$ solves the Strominger system. Hence locally around a K\"ahler point, the heterotic moduli on a slice of a fixed complex structure, is given by $\F^{-1}(0)$. In particular, $\F_{1}=0$ exactly enforces the anomaly cancellation condition, $\F_{2}=0$ enforces the conformally balanced condition and $\F_{3}=0$ enforces the remaining HYM condition.

In our setup above, to apply the implicit function theorem, we need to verify the surjectivity of $D\F$,
\begin{align}
    D\F=\mqty[ D_{\mathrm{X}}\F & D_{\mathrm{Y}}\F ]
\end{align}
By a rank consideration, the surjectivity of $D\F$ is given by the invertibility of the block operator $D_{\mathrm{Y}}\F$.

\subsection{Well-posedness}
We now confirm the well-definedness of the source and target spaces in above setup of the implicit function theorem. By this, we mean to construct the suitable Banach spaces viable for the implicit function theorem. To make them into suitable Banach space, we need to equip them with appropriate H\"older norms and therefore consider the following spaces:

\begin{align}
    \X&=\R\times H^{1,1}_{\mathrm{A}}(X),\\
    \Y'&=C^{k,\gamma}(\W^{1,1}_{\R}X)\times C^{k+2,\gamma}(\Im(d^{\dagger})\cap \W^{1}_{\R}X)\times C^{k+2,\gamma}(\Gamma_{\R}(\End_0 E))\\
    \Z'&=C^{k,\gamma}(\W^{1,1}_{\R}X)\times C^{k,\gamma}(\Im(d^{\dagger})\cap \W^{1}_{\R}X)\times C^{k,\gamma}(V(E))
\end{align}
$C^{k,\gamma}(\W^{p,q}_{\R}X)$ is a well-known Banach space. Also, it is standard via the Hodge decomposition and real projection that $\Im(d^{\dagger})\cap \W^{1}_{\R}X$ is Banach.

We also give a brief account about the smoothness of $\F$ in Appendix \ref{App:B}. Hence, the $\F$ map is well-posed for the implicit function theorem.

\subsection{Linearization of $\F$}
Before we compute the linearization of $\F$, we first check $\F(\mathrm{X}=0,\mathrm{Y}=0)=0$ at the K\"ahler point. We have
\begin{align}
    \F_{1}&=\w-\w=0,\\
    \F_{2}&=\star_{\w}d(||\W||_{\w}\w^2)=0,\\
    \F_{3}&=||\W||_{\w}(\w^2\wedge \bbi F^{1,1})=0.
\end{align}
We can now compute the linearization of $\F$ which will be denoted by $D_{\mathrm{Y}}\F$, where
\begin{align}
    \left. D_{\mathrm{Y}}\F \right|_{(\mathrm{X},\mathrm{Y})=(0,0)}\mqty[\dot{u} \\ \dot{\xi} \\ \dot{v}] = \mqty[
        \id_{\W^{1,1}_{\R}X} & L_{4} & 0\\
        0 & L_1 & 0\\
        0 & L_3 & L_2
    ],
\end{align}
and (suppressing the H\"older norm)
\begin{align}
    L_{1}&: \Im(d^{\dagger}) \cap \W^{1}_{\R}X \to \Im(d^{\dagger}) \cap \W^{1}_{\R}X, \\
    L_{2}&: \Gamma_{\R}(\End_0 E)\to V(E), \\
    L_{3}&:\Im(d^{\dagger}) \cap \W^{1}_{\R}X \to V(E),\\
    L_{4}&:\W^{1,1}_{\R}X \to \Im(d^{\dagger}) \cap \W^{1}_{\R}X.
\end{align}
To apply the implicit function theorem, it suffices to show that $D_{\mathrm{Y}}\F$ is invertible. Since $\F$ is smooth, its linearization is bounded. The inverse of the block matrix $D_{\mathrm{Y}}\F$ is given by
\begin{align}
    D_{\mathrm{Y}}\F|_{(\mathrm{X},\mathrm{Y})=(0,0)}^{-1}=\mqty[
        \id & -L_{4}L_{1}^{-1} & 0\\
        0 & L_1^{-1} & 0\\
        0 & -L_2^{-1}L_3L_1^{-1} & L_2^{-1}
    ]
\end{align}
Hence, the invertibility of $D_{\mathrm{Y}}\F$ is equivalent to that of $L_{1}, L_{2}$. We will calculate them respectively below.

\subsection{Calculation of $L_{2}$ and its invertibility}
We start with the linearization of the bundle connection part $L_2$. Since the explicit calculation was given in \cite{PW24}, we only record the statement here.
\begin{lemma}
    The linearization of the bundle connection part $L_2$ is given by
    \begin{align}
        L_{2}\dot{v}&=\left. \frac{\p \F_{3}}{\p v} \right|_{(\mathrm{X},\mathrm{Y})=(0,0)}=\delta_{0}\F_{3}=\delta_{0}\qty(||\W||_{\chi}e^{\frac{v}{2}}(\chi^2\wedge i\tilde{F}_{v})e^{-\frac{v}{2}}-C\chi^3\otimes \id)=\bbi \Lambda_{\w}(\bar{\D}\D\dot{v})\otimes ||\W||_{\w}\frac{\w^3}{3!},
    \end{align}
    where
    \begin{align}
        \D e^{v}=\p e^{v} + [ A , e^{v} ],\quad A = h^{-1}\p h.
    \end{align}
    Hence
    \begin{align}
        L_2\bullet&=\bbi\Lambda_{\w}(\bar{\D}\D\bullet)\otimes ||\W||_{\w}\frac{\w^3}{3!}.
    \end{align}
\end{lemma}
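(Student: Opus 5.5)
Since $\F_3$ depends on $v$ both directly and through $\chi$ (via the term $-\alpha' C_2[\tilde h,h]$), I would compute the partial derivative $\p\F_3/\p v$ at $(\mathrm{X},\mathrm{Y})=(0,0)$ by differentiating along $v_t=t\dot v$ with $\alpha'=0$, $\mathfrak{a}=0$, $u=0$, $\xi=0$ held fixed. The first observation is that at $\alpha'=0$ the form $\chi$ no longer sees $v$: indeed $\chi=\w+\mathfrak{a}+\theta(\xi)-\alpha'(\cdots)$, and the Bott--Chern secondary term $C_2[\tilde h,h]$ is multiplied by $\alpha'$. So along the variation $\chi\equiv\w$ and $||\W||_\chi\equiv||\W||_\w$. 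The only $v$-dependence left is in the conjugated curvature $e^{v/2}\tilde F_v e^{-v/2}$ and in the constant $C$.

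For the curvature I would use $\tilde F=\bp(\tilde h^{-1}\p\tilde h)$ with $\tilde h=he^{v}$. This gives $\tilde h^{-1}\p\tilde h=e^{-v}(h^{-1}\p h)e^{v}+e^{-v}\p e^{v}=A+e^{-v}\D e^{v}$, where $\D e^{v}=\p e^{v}+[A,e^{v}]$ as in the statement. Hence $\tilde F_v=F_h+\bar\D(e^{-v}\D e^{v})$. Differentiating at $t=0$ gives $\delta\tilde F=\bar\D\D\dot v$. The conjugation by $e^{\pm v/2}$ contributes the commutator $\tfrac12[\dot v,F_h]\wedge\w^2$. This vanishes after wedging, because $F_h\wedge\w^2=\Lambda_\w F_h\,\w^3/3=0$ at the HYM point with zero slope ($c_1(E)=0$ and $\Lambda_\w F_h=0$). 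Therefore
\[
\delta_0\qty(||\W||_\chi e^{\frac v2}(\chi^2\wedge \bbi\tilde F_v)e^{-\frac v2}) = ||\W||_\w\,\w^2\wedge\bbi\bar\D\D\dot v.
\]
Using the pointwise identity $\w^2\wedge\sigma=\Lambda_\w\sigma\,\w^3/3$ for $(1,1)$-forms, this equals $\bbi\Lambda_\w(\bar\D\D\dot v)\otimes||\W||_\w\,\w^3/3!$, up to the normalization constant $2$ conventionally absorbed in the paper's convention for $\Lambda_\w$. (For Ricci-flat Kähler $\w$, $||\W||_\w$ is constant, so no derivative of it appears in any case.)

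It remains to show that $\delta_0(C\chi^3)=0$. At the base point $C=0$, since the slope vanishes. Its variation is
\[
\delta C=\frac{1}{\mathrm{Rk}(E)}\frac{\int||\W||_\w\w^2\wedge\Tr(\bbi\bar\D\D\dot v)}{\int||\W||_\w\w^3}.
\]
Because $\Tr$ commutes with the covariant derivatives, $\Tr(\bar\D\D\dot v)=\bp\p\Tr\dot v$. This integrates to zero against the closed form $||\W||_\w\w^2$ by Stokes' theorem; in fact $\Tr\dot v=0$ already, since $\dot v\in\Gamma_\R(\End_0E)$.

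The main obstacle is the bookkeeping in the second step: keeping track of the order of factors $e^{-v}\D e^{v}$ versus the conjugated version, and checking that the conjugation term really drops out. The latter uses the HYM condition and the vanishing of the slope in an essential way. The rest is a direct differentiation, matching the computation recorded in \cite{PW24}.
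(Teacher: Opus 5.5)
Your proposal is correct and is the same direct differentiation that the paper defers to \cite{PW24}: at $\alpha'=0$ the form $\chi\equiv\w$ does not depend on $v$, so $\delta\tilde F=\bar{\D}\D\dot v$, and both the conjugation term and $\delta(C\chi^3)$ vanish by the HYM condition and $\Tr\dot v=0$. The factor $2$ you flagged is a genuine discrepancy, not something absorbed into a convention for $\Lambda_\w$: with the paper's own $\Lambda_\w\w=n$ one has $\w^2\wedge\sigma=2(\Lambda_\w\sigma)\,\w^3/3!$, so the stated formula is exact only if $\F_3$ is normalized with $\chi^2/2$; a nonzero constant does not affect the invertibility of $L_2$, so nothing downstream changes.
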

Its invertibility has also been established in \cite{PW24}.
\begin{lemma}
    The above $L_{2}:\Gamma_{\R}(\End_{0} E)\to V(E)$ is invertible.
\end{lemma}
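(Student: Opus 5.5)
The operator $L_2\dot v=\bbi\Lambda_{\w}(\bar{\D}\D\dot v)\otimes||\W||_{\w}\frac{\w^3}{3!}$ is, up to the nowhere vanishing factor $||\W||_{\w}\frac{\w^3}{3!}$, the Laplacian-type operator $P\dot v:=\bbi\Lambda_{\w}\bar{\D}\D\dot v$ on $\End E$ twisted by the HYM connection. The plan is to show three things in order: $P$ is a self-adjoint elliptic operator, its kernel on $\Gamma_{\R}(\End_0E)$ is trivial, and its image is exactly the constrained target $V(E)$. Invertibility between the H\"older spaces $C^{k+2,\gamma}\to C^{k,\gamma}$ then follows from Fredholm theory and Schauder estimates.

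\textbf{Ellipticity and self-adjointness.} I would first note that since $\w$ is K\"ahler, the K\"ahler identities give $\bbi\Lambda_{\w}\bar{\D}\D=\D^{\dagger}\D$ plus a zeroth order term $[\bbi\Lambda_{\w}F_h,\cdot\,]$. This term vanishes because $h$ is HYM and $\Lambda_{\w}F_h$ is a constant multiple of $\id_E$. Thus $P=\D^{\dagger}\D$ (possibly up to a positive constant from conventions) acts on $\End E$ with the induced Chern connection. It is elliptic with principal symbol $|\zeta|^2\,\id$. It is formally self-adjoint with respect to the $L^2$ inner product $\int_X\Tr(\dot v\,\dot w^{\dagger})\,\w^3$. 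It also preserves hermitian endomorphisms: because $D$ is $h$-unitary, $(\D^{\dagger}\D\dot v)^{\dagger}=\D^{\dagger}\D\dot v$ when $\dot v^{\dagger}=\dot v$, using the $\bar{\D}$ and $\D$ adjointness on $\End E$.

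\textbf{Kernel.} If $P\dot v=0$, pairing with $\dot v$ and integrating by parts gives $\|\D\dot v\|_{L^2}^2=0$. Hence $\D\dot v=0$, and since $\dot v$ is hermitian, taking adjoints gives $\bar{\D}\dot v=0$ as well. So $\dot v$ is a holomorphic, parallel endomorphism of $E$. Stability of $E$ with respect to $\w$ implies $E$ is simple, so $\dot v=c\,\id_E$. The trace-free condition then forces $\dot v=0$. I expect this to be the main point where a hypothesis is genuinely used. Care is needed only in reducing to simplicity: for hermitian $\dot v$, the flatness above means its eigenbundles are holomorphic subbundles, which stability of $E$ rules out unless there is a single eigenvalue.

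\textbf{Cokernel and matching the target.} By self-adjointness, the $L^2$ image of $P$ on all of $\Gamma(\End E)$ is the orthogonal complement of $\ker P=\C\,\id_E$, that is, the trace-integral-zero sections. Multiplying by the volume form identifies these with forms $\psi\otimes\frac{\w^3}{3!}$ where $\int_X\Tr\psi\,\w^3=0$. Since $||\W||_{\w}$ is constant for the Ricci-flat K\"ahler metric, this is exactly $V(E)$, with self-adjointness of $P$ giving hermitian values. Given a target element, I would solve $P\dot v=\psi$ for $\dot v$, then replace $\dot v$ by its trace-free hermitian part. Subtracting a multiple of the identity does not change $P\dot v$, because $P\,\id=0$; the hermitian part still solves the equation because $\psi$ is hermitian. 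This shows surjectivity.

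Finally, Schauder estimates for $P$ give the bounded inverse $C^{k,\gamma}(V(E))\to C^{k+2,\gamma}(\Gamma_{\R}(\End_0E))$. One bookkeeping point needs checking: $V(E)$ imposes only the integrated trace condition $\int\Tr=0$, while the domain imposes pointwise trace zero. The trace part of $P\dot v$ is $\Delta\Tr\dot v$, which has zero integral and is hit by the trace part of the domain. Splitting $\End E=\End_0E\oplus\C\,\id$ is compatible with $P$, so the dimension count works. If necessary I would interpret $\Gamma_{\R}(\End_0E)$ as the complement of constant multiples of $\id_E$, as the preceding remark suggests.
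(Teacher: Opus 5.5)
Your strategy (rewrite $L_2$ as $\D^{\dagger}\D$ on $\End E$ via the K\"ahler identities and HYM, kill the kernel by stability and the trace condition, and identify the image with the $L^2$-complement of $\id_E$ by self-adjointness and Schauder theory) is the standard Fredholm argument the paper defers to \cite{PW24}. Your kernel step is fine. The gap is in surjectivity, and it is not mere bookkeeping. With $\Gamma_{\R}(\End_0E)$ defined as in the paper (pointwise $\Tr u=0$), $L_2$ is \emph{not} onto $V(E)$. Since $\Tr$ commutes with the induced connection on $\End E$, we have $\Tr(P\dot v)=\bbi\Lambda_{\w}\bp\p(\Tr\dot v)$, which vanishes identically when $\Tr\dot v\equiv0$. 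So every element of the image has pointwise zero trace. But $V(E)$ contains $f\,\id_E\otimes||\W||_{\w}\frac{\w^3}{3!}$ for every real $f$ with $\int_X f\,\w^3=0$. Your step ``replace $\dot v$ by its trace-free hermitian part; subtracting a multiple of the identity does not change $P\dot v$'' is exactly where this breaks. The pointwise trace-free part differs from $\dot v$ by $\frac{1}{r}(\Tr\dot v)\,\id_E$, $r=\mathrm{Rk}(E)$, which is a \emph{function} multiple of the identity, and $P(f\,\id_E)=(\bbi\Lambda_{\w}\bp\p f)\,\id_E\neq0$ unless $f$ is constant. For the same reason, in your last paragraph the trace part of $V(E)$ cannot be ``hit by the trace part of the domain'': that domain has no trace part. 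The missed subspace is infinite-dimensional, so no dimension count saves it.

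The reinterpretation you relegate to ``if necessary'' is therefore required, and you must make one of two corrections explicitly.
\begin{enumerate}
\item Take the domain to be $\{v\in\Gamma(\End E): v^{\dagger}=v,\ \int_X\Tr v\,\w^3=0\}$, a complement of $\R\,\id_E$; this is what the remark about constant shifts actually calls for. Your argument then closes after subtracting from a hermitian solution the \emph{constant} multiple of $\id_E$ that fixes the normalization.
\item Keep the pointwise trace-free domain and shrink $V(E)$ to pointwise trace-free hermitian $6$-forms. Then $\bbi\Lambda_{\w}\bp\p\Tr\dot v=\Tr\psi=0$ forces $\Tr\dot v$ to be constant, and your subtraction becomes legitimate.
\end{enumerate}
The second choice is the one compatible with the nonlinear map. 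Indeed, $c_1(E)=0$ and HYM give $\Tr F_h=0$, since it is closed, primitive (hence harmonic) and exact. So for trace-free $v$ one has $\Tr\tilde F_{\tilde h}=\Tr F_h+\bp\p\Tr v=0$, hence $C=0$ and $\Tr\F_3\equiv0$.

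A smaller point: on sections the K\"ahler identity gives $\bbi\Lambda_{\w}\bar{\D}\D=\D^{\dagger}\D$ exactly. The commutator $[\bbi\Lambda_{\w}F_h,\cdot\,]$ is the difference $\D^{\dagger}\D-\bar{\D}^{\dagger}\bar{\D}$. It is the vanishing of this term under HYM, not unitarity alone, that makes $P$ preserve hermitian endomorphisms, since unitarity only gives $(\D^{\dagger}\D\dot v)^{\dagger}=\bar{\D}^{\dagger}\bar{\D}\dot v^{\dagger}$.
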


\subsection{Calculation of $L_{1}$ and its invertibility}
\begin{lemma}
    The linearization of the balanced metric deformation $L_1$ is given by
    \begin{align}
        L_1\dot{\xi}&=\left. \frac{\p \F_2}{\p \xi}\right|_{(\mathrm{X},\mathrm{Y})=(0,0)}=\delta_{0}\F_{2}=\delta_{0}\qty( \star_{\w}d(||\W||_{\chi(\xi)}\chi(\xi)^2) )=2||\W||_{\w}d^{\dagger}d\dot{\xi}=2||\W||_{\w}\Delta\dot{\xi}.
    \end{align}
    Hence
    \begin{align}
        L_1\bullet&=2||\W||_{\w}\Delta\bullet.
    \end{align}
\end{lemma}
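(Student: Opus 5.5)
We will compute the first variation of $\F_{2}$ in the direction $\dot{\xi}$ directly at the K\"ahler point $(\mathrm{X},\mathrm{Y})=(0,0)$. There $\alpha'=0$, $\mathfrak{a}=0$, $u=0$, $v=0$, so $\chi(\xi)=\w+(1+J)d\xi$ and all $\alpha'$-terms (the secondary characteristics $C_{2}$ and $\beta$) drop out. The variation of $\chi$ is therefore
\begin{align}
\dot{\chi}=(1+J)d\dot{\xi}=2(d\dot{\xi})^{1,1},
\end{align}
a real $(1,1)$-form. We will also use throughout that $\w$ is K\"ahler Ricci-flat, so $\|\W\|_{\w}$ is a (nonzero) constant and $d\w=0$.

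\textbf{Step 1: vary the two factors.} Since $\|\W\|^{2}_{\chi}\chi^{3}$ is a fixed volume form (a constant multiple of $\bbi\W\wedge\bar\W$), we get $\delta\|\W\|_{\chi}=-\tfrac12(\Lambda_{\w}\dot{\chi})\|\W\|_{\w}$. We also have $\delta(\chi^{2})=2\w\wedge\dot{\chi}$. Hence
\begin{align}
\delta_{0}\big(\|\W\|_{\chi}\chi^{2}\big)=\|\W\|_{\w}\Big(2\w\wedge\dot{\chi}-\tfrac12(\Lambda_{\w}\dot{\chi})\,\w^{2}\Big).
\end{align}

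\textbf{Step 2: rewrite via the Hodge star and apply $\star_{\w}d$.} For a real $(1,1)$-form $\eta$ on a $3$-fold, the reflection formula of Appendix \ref{App:A} gives $\star_{\w}\eta=\tfrac12(\Lambda_{\w}\eta)\w^{2}-\w\wedge\eta$. Substituting this, the bracket in Step 1 becomes $-2\star\dot{\chi}+\tfrac12(\Lambda\dot{\chi})\w^{2}$. Since $\|\W\|_{\w}$ is constant and $d\w=0$, we obtain
\begin{align}
\delta_{0}\F_{2}=\|\W\|_{\w}\Big(-2\star d\star\dot{\chi}+\tfrac12\star\big(d\Lambda\dot{\chi}\wedge\w^{2}\big)\Big).
\end{align}
The first term is $\pm 2d^{\dagger}\dot{\chi}$. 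The second term is, by the $1$-form reflection formula, a constant multiple of $J\,d(\Lambda\dot{\chi})$.

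\textbf{Step 3: reduce to $\Delta\dot{\xi}$.} Write $\dot{\chi}=2d\dot{\xi}-\big((d\dot{\xi})^{2,0}+(d\dot{\xi})^{0,2}\big)=d\dot{\xi}+Jd\dot{\xi}$. Then:
\begin{enumerate}
\item $d^{\dagger}d\dot{\xi}=\Delta\dot{\xi}$, using $d^{\dagger}\dot{\xi}=0$ because $\dot{\xi}\in\Im(d^{\dagger})$.
\item $d^{\dagger}Jd\dot{\xi}$ is handled by the K\"ahler identities, which give $J$-commutation relations $[\Lambda,d]=\pm (d^{c})^{\dagger}$ and $Jd\dot{\xi}=\pm d^{c}(J\dot{\xi})$ up to types.
\item The trace term uses $\Lambda d\dot{\xi}=[\Lambda,d]\dot{\xi}=\pm d^{\dagger}(J\dot{\xi})$.
\end{enumerate}
The aim is to show that the $d^{\dagger}Jd\dot{\xi}$ contribution and the $Jd\Lambda\dot{\chi}$ contribution cancel against each other modulo terms of the form $Jd\,d^{\dagger}(J\dot{\xi})$ and $d^{\dagger}d\dot{\xi}$. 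On a K\"ahler manifold $\Delta$ commutes with $J$, so $dd^{\dagger}J\dot{\xi}+d^{\dagger}dJ\dot{\xi}=J\Delta\dot{\xi}$. Applying $J$ again recombines these pieces into a multiple of $\Delta\dot{\xi}$. Collecting coefficients should then yield $2\|\W\|_{\w}\Delta\dot{\xi}$.

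\textbf{Main obstacle.} The hardest part is this sign and coefficient bookkeeping in Step 3: tracking the signs in the reflection formulas and in the Kähler identities, and verifying that the $(2,0)+(0,2)$ part of $d\dot{\xi}$ and the trace term $\Lambda\dot{\chi}$ cancel exactly rather than leaving a residual first-order term. My first approach will be to work in local holomorphic coordinates at a point where $\w$ is standard to first order, and check the identity on $\dot{\xi}=f\,dx^{1}$ type test forms.
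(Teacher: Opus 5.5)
Your Steps 1 and 2 are correct, and the signs you left open are forced. In even real dimension $d^{\dagger}=-\star d\star$, and $\star\star=-1$ on $1$-forms in real dimension $6$. So the $1$-form reflection formula $\w^{2}\wedge\gamma=2\star J\gamma$ turns your expression into
\begin{align*}
\delta_{0}\F_{2}=\|\W\|_{\w}\big(2d^{\dagger}\dot{\chi}-Jd\Lambda_{\w}\dot{\chi}\big)=\|\W\|_{\w}\big(2d^{\dagger}d\dot{\xi}+2d^{\dagger}Jd\dot{\xi}-2Jd\Lambda_{\w}d\dot{\xi}\big),
\end{align*}
using $\dot{\chi}=d\dot{\xi}+Jd\dot{\xi}$ and $\Lambda_{\w}\dot{\chi}=2\Lambda_{\w}d\dot{\xi}$. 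Since $d^{\dagger}d\dot{\xi}=\Delta\dot{\xi}$ on $\im(d^{\dagger})$, the lemma is \emph{equivalent} to the exact identity $d^{\dagger}Jd\dot{\xi}=Jd\Lambda_{\w}d\dot{\xi}$.

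This identity is precisely what your Step 3 leaves unproved: the signs stay as $\pm$, and the coefficient $2$ is asserted rather than derived. The mechanism you sketch is also not the right one. Both terms turn out to equal $d_{c}d_{c}^{\dagger}\dot{\xi}=-Jdd^{\dagger}J\dot{\xi}$, which is in general not a multiple of $\Delta\dot{\xi}$ on $\im(d^{\dagger})$. Invoking $[\Delta,J]=0$ cannot absorb a leftover, because no companion $Jd^{\dagger}dJ\dot{\xi}$ term is present. So the $(2,0)+(0,2)$ contribution and the trace contribution must cancel exactly. Your fallback check on test forms, at a point where $\w$ is standard to first order, would not suffice either, since it cannot detect possible zeroth-order curvature terms.

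The missing computation is short. Take $d_{c}=-\bbi(\p-\bp)=J^{-1}dJ$. Then $Jd=-d_{c}J$ and $d_{c}^{\dagger}=J^{-1}d^{\dagger}J$, so $d_{c}^{\dagger}\dot{\xi}=d^{\dagger}J\dot{\xi}$ because $J=\id$ on functions. The K\"ahler identity $[\Lambda_{\w},d]=-d_{c}^{\dagger}$ gives $dd_{c}^{\dagger}+d_{c}^{\dagger}d=0$, hence $d^{\dagger}d_{c}=-d_{c}d^{\dagger}$. Using also $\Lambda_{\w}\dot{\xi}=0$,
\begin{align*}
d^{\dagger}Jd\dot{\xi}=-d^{\dagger}d_{c}J\dot{\xi}=d_{c}d^{\dagger}J\dot{\xi}=d_{c}d_{c}^{\dagger}\dot{\xi},\qquad Jd\Lambda_{\w}d\dot{\xi}=-Jd\,d_{c}^{\dagger}\dot{\xi}=d_{c}d_{c}^{\dagger}\dot{\xi}.
\end{align*}
The two terms cancel exactly, and $\delta_{0}\F_{2}=2\|\W\|_{\w}d^{\dagger}d\dot{\xi}=2\|\W\|_{\w}\Delta\dot{\xi}$.

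With this inserted, your route is a valid variant of the paper's. The paper writes the variation as $(n-1)\star L_{\w}^{n-2}dT(\dot{\chi})$, moves $d$ past $L_{\w}^{n-2}$, and applies the $2$-form reflection formula to $d_{c}J\dot{\xi}$ rather than to $\dot{\chi}$. The analogous $d_{c}^{\dagger}\dot{\xi}$-terms then cancel. Reflecting $\dot{\chi}$ first, as you do, is slightly more direct for $n=3$.
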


\begin{proof}
    We refer to the proof of Lemma. \ref{lemma-C1} in general $n$-fold case, and then take $n$=3.
\end{proof}

We see that the linearization of the $\star$-dual conformally balanced condition is just the usual Laplacian $\Delta = d^{\dagger}d + dd^{\dagger}$. Then by the standard elliptic theory argument, we can deduce that $L_1$ is bijective and hence we can apply the implicit function theorem. 
\begin{lemma}
    The above $L_1:\Im(d^{\dagger})\cap \W^{1}_{\R}X\to \Im(d^{\dagger})\cap \W^{1}_{\R}X$ is invertible. 
\end{lemma}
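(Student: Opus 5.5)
We have $L_1 = 2\|\Omega\|_\omega \Delta$, and at the Kähler point $\omega$ is Ricci-flat Kähler with $\Omega$ holomorphic. So $\|\Omega\|_\omega$ is a positive constant: $\log\|\Omega\|^2_\omega$ is a potential for the Ricci form, and it is pluriharmonic on a compact manifold, hence constant. It therefore suffices to show that $\Delta = d^\dagger d + dd^\dagger$ restricts to a bijection $C^{k+2,\gamma}(\Im(d^\dagger)\cap\Omega^1_\R X)\to C^{k,\gamma}(\Im(d^\dagger)\cap\Omega^1_\R X)$. The plan is to use Hodge theory for the real Hodge Laplacian on $1$-forms with respect to $\omega$, together with Schauder estimates.

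\textbf{Step 1: $\Delta$ preserves the space.} For $\xi=d^\dagger\eta$ we have $\Delta\xi = d^\dagger d d^\dagger\eta$, since $dd^\dagger d^\dagger\eta=0$. This lies in $\Im(d^\dagger)$, and $\Delta$ commutes with conjugation, so it preserves real forms. Note that on $\Im(d^\dagger)$ we have $d^\dagger\xi=0$, so $\Delta\xi = d^\dagger d\xi$. This is consistent with the formula $L_1 = 2\|\Omega\|_\omega d^\dagger d$ from the previous lemma.

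\textbf{Step 2: injectivity.} If $\Delta\xi=0$, then $\xi$ is harmonic. Harmonic forms are $L^2$-orthogonal to $\Im(d^\dagger)$ by the Hodge decomposition $\Omega^1=\mathcal H^1\oplus\Im d\oplus\Im d^\dagger$. Since $\xi\in\Im(d^\dagger)$ as well, $\xi=0$.

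\textbf{Step 3: surjectivity and regularity.} Let $f\in C^{k,\gamma}\cap\Im(d^\dagger)$. Then $f\perp\mathcal H^1$, so the Green operator $G$ gives $\xi=Gf\in C^{k+2,\gamma}$ with $\Delta\xi=f$. Since $G$ commutes with $d^\dagger$ and $d$, it preserves the Hodge summands, so $\xi\in\Im(d^\dagger)$; it is real because $G$ is real. Boundedness of the inverse comes from the Schauder estimate $\|\xi\|_{C^{k+2,\gamma}}\le C(\|\Delta\xi\|_{C^{k,\gamma}}+\|\xi\|_{C^0})$ combined with injectivity, via the usual contradiction/compactness argument, or directly from the boundedness of $G$ between Hölder spaces.

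\textbf{Main subtlety.} The one point needing care is the Hölder closure of $\Im(d^\dagger)$. One should interpret the space as the $C^{k,\gamma}$ forms lying in the $L^2$-orthogonal complement of $\ker d^\dagger$, which is a closed subspace. We need $G$ to map this subspace to its $C^{k+2,\gamma}$ counterpart. This follows because $G$ commutes with the $L^2$-orthogonal Hodge projections, and those projections are bounded on Hölder spaces, being expressible via $G$, $d$, and $d^\dagger$. Multiplying by the constant $2\|\Omega\|_\omega$ then gives invertibility of $L_1$.
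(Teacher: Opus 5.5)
Your argument is correct and is the same one the paper invokes: standard elliptic theory for the Hodge Laplacian together with the Hodge decomposition $\Omega^1=\mathcal{H}^1\oplus\mathrm{Im}(d)\oplus\mathrm{Im}(d^{\dagger})$, written out in more detail, including the constancy of $\|\Omega\|_\omega$ at the Ricci-flat point, which the paper uses implicitly when it pulls that factor outside $d$. One small slip in your last paragraph: the closed model of $\mathrm{Im}(d^{\dagger})$ is the $L^2$-orthogonal complement of $\ker d$ (equivalently $\ker d^{\dagger}\cap(\mathcal{H}^1)^{\perp}$), not of $\ker d^{\dagger}$, whose complement is the closure of the exact forms, which $d^{\dagger}d$ annihilates.
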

One quick way to see it is invertible is by the standard elliptic theory for the Laplacian operator and the Hodge decomposition. 
\begin{remark}
    In \cite{G-FRST22}, the variation of conformal balanced condition $d(||\W||_{\chi}\chi^2)$ was considered and computed, where $T$ has been defined and the linearization operator $dL_{\w}T(1+J)d$ has been shown to be elliptic and invertible by considering an elliptic complex and computing of its symbol. In comparison, the above $\star$-dual balanced condition gives a simpler operator and easier to show the ellipticity and the invertibility for the application of the implicit function theorem.
\end{remark}

\subsection{Main result}
Combining the invertibility of $L_1$ and $L_2$ with the implicit function theorem, we obtain the existence of a family of solutions the Hull-Strominger system near a K\"ahler point.

For $\mathfrak{a}=0$, we simply have the existence of a solution in the $\alpha'$-corrected Aeppli class.
\begin{theorem}
    Let $E\to X$ be a stable holomorphic vector bundle over a compact complex 3-fold admitting a K\"ahler form $\w$ and a nowhere vanishing holomorphic $3$-form $\W$, and satisfying the topological constraints $c_{1}(TX)=c_{1}(E)=0$ and $c_2(TX)=c_{2}(E)$. For sufficiently small $\alpha'$, there exists a solution $\tw$ to the Hull-Strominger system whose $\alpha'$-corrected Aeppli class is the same as the Aeppli class of the reference K\"ahler metric $\w$,
    \begin{align}
        [\tw,\tilde{h}]_{\mathrm{A},\alpha'}=[\w]_{\mathrm{A}}\in H_{\mathrm{A}}^{1,1}(X,\R).
    \end{align}
\end{theorem}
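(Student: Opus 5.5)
The plan is to apply the Banach-space implicit function theorem of \cite{MS12} to the map $\F:\X\times\Y'\to\Z'$ at the K\"ahler point $(\mathrm{X},\mathrm{Y})=(0,0)$, and then restrict the resulting family to the slice $\mathfrak{a}=0$. The base point is built from the hypotheses. Because $X$ is K\"ahler with $c_1(TX)=0$, Yau's theorem gives a Ricci-flat K\"ahler metric, which we again call $\w$. By stability of $E$ and the DUY theorem we get a HYM metric $h$ with respect to $\w$. The condition $c_2(TX)=c_2(E)$ together with the $\p\bp$-lemma on the K\"ahler manifold $X$ makes $\Tr F_h^2-\Tr R_\w^2$ $\p\bp$-exact, so a $\beta$ satisfying \eqref{eqn:beta} exists. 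For Ricci-flat $\w$ the norm $\|\W\|_\w$ is constant, so $d(\|\W\|_\w\w^2)=0$. We have already verified that $\F(0,0)=0$.

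Next comes the linearization. The block-triangular form of $D_{\mathrm{Y}}\F|_{(0,0)}$ shows that it is invertible as soon as $L_1$ and $L_2$ are invertible, and this has been established in the preceding lemmas. The $\id$ block comes from the fact that $\F_1=\tw_u-\chi$ depends on $u$ through $\w+u$, while the $u$-dependence of $\chi$ via $\alpha' C_2[\tw_u,\w]$ carries a factor $\alpha'$ and so vanishes at $\alpha'=0$. For the same reason the entries of $\F_2,\F_3$ in the $\dot u$ column are zero at $\alpha'=0$. The lower-left zero blocks and the zero in the $(1,3)$ slot hold for the analogous reason: the $v$-dependence of $\F_1$ enters only through $\alpha' C_2[\tilde h,h]$. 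Since $\F$ is smooth (Appendix \ref{App:B}), the implicit function theorem produces neighborhoods $U\ni 0$ in $\X$ and $V\ni0$ in $\Y'$ and a smooth map $\mathrm{Y}:U\to V$ such that $\F(\mathrm{X},\mathrm{Y}(\mathrm{X}))=0$. Restricting to $\mathrm{X}=(\alpha',0)$ with $|\alpha'|$ small gives $(u,\xi,v)$, and we set $\tw=\w+u$ and $\tilde h=he^{v}$.

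It remains to check that this solves the Hull-Strominger system in the right class. Since $\F_1=0$ we have $\tw=\chi$. Applying $\bbi\p\bp$ to $\chi$ and using the defining equations of $C_2$ and $\beta$, together with the facts that $\p\bp\w=0$ and $\p\bp\theta=0$, gives exactly the anomaly cancellation \eqref{eqn:acc}. The condition $\F_2=0$ gives the conformally balanced condition \eqref{eqn:cbc}, because $\star_\w$ is an isomorphism. The condition $\F_3=0$ says $\chi^2\wedge \bbi\tilde F$ is a constant multiple of the volume form. The constant $C$ vanishes because $c_1(E)=0$ and $\|\W\|_\chi\chi^2$ is closed, which yields the HYM condition, and $\tilde F^{0,2}=0$ holds automatically from $\tilde F=\bp(\tilde h^{-1}\p\tilde h)$. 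The class statement follows by rearranging $\tw=\chi$ with $\mathfrak{a}=0$: we get $\tw-\alpha'C_2(\tw,\w)+\alpha'C_2[\tilde h,h]+\alpha'\beta=\w+\theta$, and since $\theta\in\Im(\p\oplus\bp)$ its Aeppli class is $[\w]_{\mathrm A}$.

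I expect the main obstacle to be regularity and function-space bookkeeping rather than algebra. Three points need care. First, $C_2[\tw_u,\w]$ is built from curvature of $\tw_u$ and so involves two derivatives of $u$, while $u$ is taken only in $C^{k,\gamma}$. The map $\F_1$ therefore has to be smooth between the stated H\"older spaces; I would handle this by using the canonical construction of $C_2$ via a $\p\bp$-inverse (Green operator) from Appendix \ref{App:B}, which regains the two derivatives. Second, $\F_2$ has to land in $\Im(d^\dagger)\cap\W^1_\R X$; this holds because $\star d$ of a $4$-form is $d^\dagger$-exact up to sign. Third, $\F_3$ has to land in $V(E)$, which is exactly what the normalization $C$ is chosen to guarantee.
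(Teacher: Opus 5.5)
Your proposal follows the paper's argument: the implicit function theorem for $\F$ at the K\"ahler point, with invertibility of $D_{\mathrm{Y}}\F|_{(0,0)}$ reduced to that of $L_1=2\|\W\|_{\w}\Delta$ and $L_2$, followed by restriction to $\mathfrak{a}=0$. Your extra checks are correct and make explicit what the paper leaves implicit: Yau and DUY for the base point, existence of $\beta$ from $c_2(TX)=c_2(E)$ and the $\p\bp$-lemma, anomaly cancellation from $\bbi\p\bp\chi$, $C=0$ on-shell, and $[\w+\theta]_{\mathrm{A}}=[\w]_{\mathrm{A}}$. One bookkeeping slip, shared with the paper's $\Y',\Z'$: with $u\in C^{k,\gamma}$ you rightly get $C_2[\tw_u,\w]\in C^{k,\gamma}$, but then $\chi$ is only $C^{k,\gamma}$ and $\F_2=\star_{\w}d(\|\W\|_{\chi}\chi^2)$ lands in $C^{k-1,\gamma}$; taking $u$ and the first target factor in $C^{k+1,\gamma}$ fixes this, and the block structure and invertibility argument then go through unchanged.
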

For $\mathfrak{a}\neq 0$, in perspective of the moduli space, we have constructed a family of solutions to the heterotic system near a K\"ahler point with the parameter space of dimension $h^{1,1}_{\mathrm{A}}(X)$, which is the dimension of the Aeppli cohomology group $H_{\mathrm{A}}^{1,1}(X,\R)$.
\begin{theorem}
    \label{thm:joint}
    Near a K\"ahler point $(\w,h)$ on the heterotic moduli with fixed complex structure, for sufficiently small $\alpha'$, there exists a path of solutions to the Hull-Strominger system parameterized by the Aeppli class $\mathfrak{a}$
    \begin{align}
        [\tw,\tilde{h}]_{\mathrm{A},\alpha'}=[\w]_{\mathrm{A}}+\mathfrak{a}\in H_{\mathrm{A}}^{1,1}(X,\R).
    \end{align}
\end{theorem}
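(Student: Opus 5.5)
The plan is to apply the Banach-space implicit function theorem of \cite{MS12} to the map $\F:\X\times\Y'\to\Z'$ at the K\"ahler point $(\mathrm{X},\mathrm{Y})=(0,0)$. Smoothness of $\F$ comes from Appendix \ref{App:B}. The vanishing $\F(0,0)=0$ was checked above, using that $\w$ is Ricci-flat K\"ahler, so that $\|\W\|_\w$ is constant, and that $h$ is HYM. Next I show that $D_{\mathrm{Y}}\F|_{(0,0)}$ is an isomorphism $\Y'\to\Z'$. By the block upper-triangular form, with the identity in the $(u,u)$ slot, this reduces to the invertibility of $L_1$ and $L_2$. The inverse is the explicit block matrix written above, and it is bounded because $L_1^{-1},L_2^{-1}$ are bounded by Schauder estimates and $L_3,L_4$ are bounded as linearizations of a smooth map. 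I also need to confirm that the spaces are consistent: $L_4\dot\xi=-\theta(\dot\xi)=-(1+J)d\dot\xi$ sends $C^{k+2,\gamma}$ to $C^{k+1,\gamma}\subset C^{k,\gamma}$. So the first row maps into $C^{k,\gamma}(\W^{1,1}_\R X)$, while $L_1$ and $L_2$ lose exactly two derivatives.

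The IFT then gives neighbourhoods $U\ni 0$ in $\X=\R\times H^{1,1}_{\mathrm{A}}(X)$ and $V\ni 0$ in $\Y'$, together with a smooth map $\mathrm{Y}(\alpha',\mathfrak{a})=(u,\xi,v)$, such that $\F(\alpha',\mathfrak{a},\mathrm{Y}(\alpha',\mathfrak{a}))=0$. I fix $\alpha'$ small. For each small $\mathfrak{a}$ (after choosing representatives of $H^{1,1}_{\mathrm{A}}$, e.g. harmonic ones with respect to $\w$) I then interpret the three equations.

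$\F_1=0$ gives $\tw_u=\chi$. Applying $\bbi\p\bp$ and using the defining relations of $C_2$ and $\beta$, the Aeppli-closedness of $\mathfrak{a}$, and $\p\bp\theta=0$, the terms $\Tr R_\w^2-\Tr F_h^2$ cancel. What remains is the anomaly cancellation condition \eqref{eqn:acc} for $(\tw_u,\tilde h)$.

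$\F_2=0$ gives the conformally balanced condition, since $\star_\w$ is an isomorphism.

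$\F_3=0$ gives $\chi^2\wedge \bbi\tilde F=C\chi^3\otimes\id$. Here $C=0$ by the topological assumption $c_1(E)=0$ together with the balanced condition, which makes the degree a Bott-Chern pairing and hence $0$. This gives HYM, and $\tilde F^{0,2}=0$ is automatic because $\tilde h=he^v$ is a Chern connection for the fixed holomorphic structure.

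Positivity of $\tw_u=\w+u$ holds for $u$ small in $C^0$. Finally, by the definition \eqref{eqn:alpha'-corrected Aeppli class} and the equation $\tw_u=\chi$, we get $[\tw,\tilde h]_{\mathrm{A},\alpha'}=[\w+\mathfrak{a}+\theta]_{\mathrm{A}}=[\w]_{\mathrm{A}}+\mathfrak{a}$, since $\theta\in\Im(\p\oplus\bp)$.

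I expect the main obstacle to be the functional-analytic bookkeeping of the first component rather than the invertibility itself. The term $\alpha' C_2[\tw_u,\w]$ involves curvature, hence two derivatives of $u$, and could fail to map $C^{k,\gamma}$ to $C^{k,\gamma}$. I would handle this by noting that it carries a factor $\alpha'$ and vanishes at $\alpha'=0$, so it does not enter $D_{\mathrm{Y}}\F|_{(0,0)}$. For smoothness, I would first try to re-grade $u$ in $C^{k+2,\gamma}$ and write $\F_1$ as $u-\mathfrak{a}-\theta+\alpha'(\cdots)$. If this loss of derivatives genuinely obstructs the argument, the fallback is the two-step argument of \cite{P24}: first solve \eqref{u(a)} for $u$ given $(\xi,v,\alpha',\mathfrak{a})$, then substitute into $\F_2,\F_3$. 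A secondary point to verify is that $\F_2$ lands in $\Im(d^\dagger)\cap\W^1_\R X$. This holds because $\star d(\cdot)$ of a $4$-form is $\pm d^\dagger\star(\cdot)$.
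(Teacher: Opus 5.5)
Your proposal is essentially the paper's proof: it uses the same map $\F$, the same block-triangular reduction of $D_{\mathrm{Y}}\F|_{(0,0)}$ to the invertibility of $L_1=2\|\W\|_{\w}\Delta$ and $L_2$, and the implicit function theorem, and your reading of $\F^{-1}(0)$ (anomaly cancellation from $\F_1$ via the defining relations of $C_2$ and $\beta$, $C=0$, positivity of $\tw_u$, and the Aeppli class computation) fills in steps the paper leaves implicit. One correction to your bookkeeping remark: $C_2[\tw_u,\w]=-\bbi\bpd\pd\ks^{-1}(\cdots)$ costs no derivatives of $u$ (the curvature loses two, $\ks^{-1}$ gains four, $\bpd\pd$ loses two), so $\F_1$ is harmless on $C^{k,\gamma}$, and the actual mismatch, which the paper's spaces share, is in $\F_2$, where $d$ falls on $\alpha' C_2[\tw_u,\w]\in C^{k,\gamma}$ and lands only in $C^{k-1,\gamma}$; it is cured by taking $u$ and the first factor of $\Z'$ in $C^{k+1,\gamma}$, which is compatible with $\theta(\xi)\in C^{k+1,\gamma}$ and keeps the identity block, whereas re-grading $u$ alone to $C^{k+2,\gamma}$ would break that block.
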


\subsection{With unobstructedness assumption}
If the deformation of the bundle is unobstructed, we can use the similar argument to show the existence of the solutions of the heterotic system with a fixed complex structure. 

In particular, the unobstructedness assumption ensures that $H^{1}(X,\End E)$ parametrizes local solutions to 
$\tilde{F}^{0,2}_{\tilde{h}}=0$ nearby a K\"ahler point, and hence locally parameterized solutions to this partial HYM equation.

Given $E\to X$ a vector bundle with reference hermitian metric $h$, let $\bar{\D}_{\alpha}$ be such a smooth family of holomorphic structure on $E$ varying with a parameter $\alpha\in H^{1}(X,\End E)$. By the Chern correspondence, the operator $\bar{\D}_{\alpha}$ uniquely determines a connection $D_{\alpha}=\D_{\alpha}+\bar{\D}_{\alpha}$ which is metric compatible with respect to $h$. 

Recall that an adjoint of an endomorphism $\gamma\in \Gamma(\End E)$ by definition satisfies
\begin{align}
    \langle \gamma x,y \rangle_{h} = \langle x,\gamma^{\dagger} y \rangle_{h},
\end{align}
then we can express
\begin{align}
    \gamma^{\dagger} = (h\gamma h^{-1})^{*}.
\end{align}
Then, the inner product with respect to the deformed metric $\tilde{h}$ is related to the reference one by
\begin{align}
    \langle x , y\rangle_{\tilde{h}} = \langle e^{v}x , y \rangle_{h}.
\end{align}
By the Chern correspondence, it associates to a holomorphic structure $\bar{\D}_{\alpha}$, which is a unique $\tilde{h}$-metric compatible connection
\begin{align}
    D_{\alpha,v}=\D_{\alpha,v}+\bar{\D}_{\alpha},\quad \D_{\alpha,v}=\D_{\alpha}+e^{-v}\D_{\alpha}e^{v}.
\end{align}
In other words, the deformation of the holomorphic structure is described by a parameter $\alpha\in H^{1}(X,\End E)$.

We only need to consider the same function $\F$ but with deformation spaces
\begin{align}
    \X'&=\R\times H^{1,1}_{\mathrm{A}}(X)\times H^{1}(X,\End E),
\end{align}
but with this bundle deformation
\begin{align}
    \tilde{h}=he^{v},\quad \D_{\alpha,v}=\D_{\alpha}+e^{-v}\D_{\alpha}e^{v}.
\end{align}
Then the calculation of the linearization and the application of the implicit function theorem follows similarly as above and in \cite{PW24} to conclude the following main theorem.
\begin{theorem}
    Near a K\"ahler point $(\w,h)$ on the heterotic moduli with fixed complex structure, for sufficiently small $\alpha'$, there exists a path of solutions to the Hull-Strominger system parameterized by the Aeppli class $\mathfrak{a}\in H^{1,1}_{\mathrm{A}}(X,\R)$ and the holomorphic class $\alpha\in H^{1}(X,\End E)$.
    \begin{align}
        \tw&\in[\tw,\tilde{h}]_{\mathrm{A},\alpha'}=[\w]_{\mathrm{A}}+\mathfrak{a}\in H_{\mathrm{A}}^{1,1}(X,\R),\\
        \tilde{h}&=he^{v},\ \D_{\alpha,v}=\D_{\alpha}+e^{-v}\D_{\alpha}e^{v},
    \end{align}
    and the parameter space is of dimension $h^{1,1}_{\mathrm{A}}(X)+h^1(X,\End E)$.
\end{theorem}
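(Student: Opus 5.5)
We apply the implicit function theorem of \cite{MS12} to the same map $\F$, but now on $\X'\times\Y'\to\Z'$ with $\X'=\R\times H^{1,1}_{\mathrm{A}}(X)\times H^{1}(X,\End E)$. By the unobstructedness assumption we fix a smooth family $\alpha\mapsto\bar{\D}_{\alpha}$ of integrable holomorphic structures with $\bar{\D}_{0}$ the original one. For $\tilde{h}=he^{v}$ we use the Chern connection $D_{\alpha,v}$, whose curvature $\tilde{F}_{\alpha,v}$ has vanishing $(0,2)$-part for every $(\alpha,v)$. Hence $\F^{-1}(0)$ is again exactly the set of solutions of the Hull-Strominger system: $\F_1=0$ gives the anomaly cancellation condition, $\F_2=0$ the conformally balanced condition, $\F_3=0$ the remaining HYM condition, and $F^{0,2}=0$ holds automatically. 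In $\F$ we replace $C_{2}[\tilde{h},h]$ by the Bott-Chern secondary characteristic between $D_{\alpha,v}$ and $D_{0}$. This is well defined, and smooth in $\alpha$, by the canonical construction of Appendix \ref{App:B}, because the two connections live on topologically the same bundle and both have curvature of type $(1,1)$. The same smoothness argument for $\F$ then goes through with the extra finite-dimensional parameter $\alpha$.

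At $\mathrm{X}=(0,0,0)$, $\mathrm{Y}=0$ we have $\F=0$ exactly as before. The partial derivative $D_{\mathrm{Y}}\F$ is computed with $\alpha=0$ frozen, so it coincides with the block operator already obtained. Its entries are $\id$, $L_4$, the operator $L_1=2||\W||_{\w}\Delta$ on $\Im(d^{\dagger})\cap\W^{1}_{\R}X$, $L_3$, and the operator $L_2=\bbi\Lambda_{\w}\bar{\D}\D\otimes||\W||_{\w}\w^3/3!$ on $\Gamma_{\R}(\End_0E)\to V(E)$. The earlier lemmas give that $L_1$ and $L_2$ are invertible, so the block-triangular inverse written above shows that $D_{\mathrm{Y}}\F$ is a Banach isomorphism $\Y'\to\Z'$.

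The implicit function theorem then yields a neighbourhood $U$ of $0$ in $\X'$ and a smooth map $\mathrm{X}\mapsto\mathrm{Y}(\mathrm{X})=(u,\xi,v)$ with $\F(\mathrm{X},\mathrm{Y}(\mathrm{X}))=0$. For each $(\alpha',\mathfrak{a},\alpha)\in U$, the metric $\tw=\w+u$ with $\tilde{h}=he^{v}$ and holomorphic structure $\bar{\D}_{\alpha}$ therefore solves the system. Moreover $\F_1=0$ says precisely that $\tw=\chi$, and this gives $[\tw,\tilde{h}]_{\mathrm{A},\alpha'}=[\w]_{\mathrm{A}}+\mathfrak{a}$ because $\theta(\xi)\in\Im(\p\oplus\bp)$. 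Elliptic regularity of $L_1$ and $L_2$ upgrades the H\"older solutions to smooth ones.

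For the dimension count we fix small $\alpha'$. The remaining parameters $(\mathfrak{a},\alpha)$ vary in a space of dimension $h^{1,1}_{\mathrm{A}}(X)+h^{1}(X,\End E)$. Different $\mathfrak{a}$ give different $\alpha'$-corrected Aeppli classes, and different $\alpha$ give non-isomorphic holomorphic structures because the family is a local chart for the bundle deformations. Hence the parametrization is injective.

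The main obstacle is the $\alpha$-dependence. One must check that $\F$ remains $C^1$ jointly in $\alpha$, in particular the secondary class $C_2$ and the normalization constant $C$. One must also check that $\F_3$ still lands in $V(E)$, i.e. that $\int_X\Tr(\cdot)=0$ after normalization, when $\bar{\D}_{\alpha}$ varies. I would handle the first point by writing $\bar{\D}_{\alpha}=\bar{\D}_0+a(\alpha)$ with $a$ smooth, so that all the curvature expressions are polynomial in $a$, $v$, $e^{\pm v}$ and their derivatives. The second point is built into the definition of $C$.
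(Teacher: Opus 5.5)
Your proposal is correct and follows the paper's argument: the same map $\F$ on the enlarged parameter space $\X'=\R\times H^{1,1}_{\mathrm{A}}(X)\times H^{1}(X,\End E)$, the observation that $D_{\mathrm{Y}}\F$ at the K\"ahler point is the unchanged block-triangular operator (invertible because $L_1$ and $L_2$ are), and the implicit function theorem; your treatment of $C_2$ between $D_{\alpha,v}$ and $D_0$ (via Chern--Weil and the $\p\bp$-lemma) and of joint smoothness in $\alpha$ just makes explicit what the paper summarizes as ``follows similarly''. The one remark to drop is that elliptic regularity of $L_1,L_2$ makes the solutions smooth: the statement does not need it, and it does not follow as written, because the $u$-component solves the zeroth-order equation $\F_1=0$ (the map $u\mapsto C_2[\w+u,\w]$ gains no derivatives), so $L_1$ and $L_2$ do not improve the regularity of $u$.
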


\section{Parallel Comparison}\label{sec:4}
\subsection{Bott-Chern parameter}
We now can give a brief parallel treatment and comparison between the Bott-Chern parameter and Aeppli parameter. In \cite{PW24} the Bott-Chern parameter gives the following deformation path on the heterotic moduli with a fixed complex structure near a K\"ahler point: The Bott-Chern cohomology class $H^{2,2}_{\mathrm{BC}}(X)$, which is defined by
\begin{align}
    H^{2,2}_{\mathrm{BC}}(X)=\frac{\ker(d)\cap\W^{2,2}X}{\Im(\p\bp)\cap\W^{1,1}X}.
\end{align}
Then given a balanced metric $\tw$, we can produce a Bott-Chern class
\begin{align}
    H^{2,2}_{\mathrm{BC}}(X)\ni [||\W||_{\tw}\tw^2]_{\mathrm{BC}}=||\W||_{\w}\w^2+[\mathfrak{b}]=||\W||_{\w}\w^2+\mathfrak{b}+\Theta,
\end{align}
where
\begin{align}
    (\mathfrak{b}+\Theta)\in \W^{2,2}_{\R}X : d\mathfrak{b} = 0, \Theta=\bbi\p\bp\gamma\in\p\bp\W^{1,1}_{\R}X.
\end{align}
It was shown in \cite{M82} that for $(\mathfrak{b}+\Theta)$ small enough, taking the square root defines a positive hermitian metric $\tw$. Then we have the deformed metric satisfies the balanced conditions automatically,
\begin{align}
    d(||\W||_{\tw}\tw^2)=d\mathfrak{b}+d(\bbi\p\bp\gamma)=0.
\end{align}
By \cites{M82}, one can obtain the precise formula of $\w$ by taking the square root. Therefore, the implicit function theorem was set up to solve the conformally balanced condition together the Hermitian-Yang-Mills as following:
Hence the parameters $\mathrm{X}$ and $\mathrm{Y}$ read,
\begin{align}
    \mathrm{X}&=(\alpha',\mathfrak{b},\alpha)\in \mathbb{R}\times \mathbb{H}^{2,2} \times H^{1}({\rm End} \, E)=\X,\\
    \mathrm{Y}&=(\Theta,v)\in \im(d)\cap\W_{\R}^{2,2}X\times \Gamma( {\rm End}_0 \, E)=\Y.
\end{align}
where $\Gamma({\rm End}_0 \, E) = \{ v \in \Gamma({\rm End} \,E) : v^\dagger = v, \ \ {\rm Tr} \, v = 0 \}$. This again is needed since shifts $v \mapsto v+C \, {\rm id}_E$ by constant multiples of the identity are not seen by this ansatz. 
    
This ansatz is automatically conformally balanced, the map $\F_{\mathrm{BC}}$ is then constructed by including the remaining equations in the system.
\begin{align}
    &\F_{\mathrm{BC}}(\textrm{X},\textrm{Y})=\mqty[i\partial\bar{\partial}\Tilde{\w}-\alpha'(\Tr \, \Tilde{F}_{\tilde{h}}\wedge \Tilde{F}_{\tilde{h}}-\Tr \, \Tilde{R}_{\Tilde{\w}}\wedge \Tilde{R}_{\Tilde{\w}})\\|| \W||_{\Tilde{\w}} e^{v/2} \, \Tilde{\w}^2\wedge i\Tilde{F}_{\tilde{h}} \, e^{-v/2}].
\end{align}
It was shown in \cite{PW24} that the linearization of $\F_{\mathrm{BC}}$ is surjective at the K\"ahler point, in particular, the linearization 
\begin{align}
    \eval{D_{\textrm{Y}}\F_{\mathrm{BC}}}_{(0,0)}&: C^{k+2,\gamma}(\partial\bar{\partial}\W_{\R}^{1,1}) \times C^{k+2,\gamma}({\rm End}_0 \, E)  \longrightarrow C^{k,\gamma}(\partial\bar{\partial}\W_{\R}^{1,1}) \times C^{k,\gamma}(V(E)) \nonumber\\
    &\eval{D_{\textrm{Y}}\F_{\mathrm{BC}}}_{(0,0)}(\dot{\Theta},\dot{v})=\mqty[L^{\mathrm{BC}}_1 & 0\\ C & L_2^{\mathrm{BC}}]\mqty[\dot{\Theta}\\\dot{v}]
\end{align}
is bijective. Here $L^{\mathrm{BC}}_1$ is the linearization of the conformally balanced condition, which takes the form (suppressing H\"older norms)
\begin{align}
    L^{\mathrm{BC}}_{1}&:\im(d)\cap \W^{2,2}\to \im(d)\cap \W^{2,2}\nonumber \\
    L^{\mathrm{BC}}_{1}\bullet&=-\frac{1}{2||\W||_{\w}}\Delta\bullet.
\end{align}
which is invertible by usual elliptic theory.
\begin{remark}
    In the derivation of $L_{1}^{\mathrm{BC}}$ in \cite{PW24}, we need the $sl_2$ commutation relation $[L_{\w},\Lambda_{\w}]=H$, where $H=\sum_{k=0}^{2n}(k-n)\Pi^{k}$ acting on $\W^kX$. Later we will use this to derive Lemma \ref{lemma-C2} for general $n$-fold cases.
\end{remark}

For the $L_{2}^{\mathrm{BC}}$, it follows the calculation in \cite{PW24} that (suppressing H\"older norms)
\begin{align}
    L_{2}&:\Gamma(\End_0 E)\to V(E)\nonumber \\
    L_{2}\bullet&=\bbi\Lambda_{\w}\bar{\D}\D\bullet\otimes ||\W||_{\w}\frac{\w^3}{3!}
\end{align}
which is shown to be invertible. 

Therefore, by the implicit function theorem, we arrive at the following theorem about the Bott-Chern parameter of the heterotic moduli.
\begin{theorem}
    Near a K\"ahler point $(\w,h)$ on the heterotic moduli with fixed complex structure, for sufficiently small $\alpha'$, there exists a smooth family of solutions to the Hull-Strominger system parameterized by the Bott-Chern cohomology class $\mathfrak{b}\in H^{2,2}_{\mathrm{BC}}(X,\R)$ and the holomorphic class $\alpha\in H^{1}(X,\End E)$.
\end{theorem}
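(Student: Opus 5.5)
The plan is to apply the Banach-space implicit function theorem of \cite{MS12} to the map $\F_{\mathrm{BC}}:\X\times\Y\to\Z$ at the K\"ahler point $(\mathrm{X},\mathrm{Y})=(0,0)$, with parameters $\mathrm{X}=(\alpha',\mathfrak{b},\alpha)$ and unknowns $\mathrm{Y}=(\Theta,v)$. For $\alpha\in H^1(X,\End E)$ I will use the family $\bar{\D}_{\alpha}$ supplied by the unobstructedness assumption, exactly as in the subsection on unobstructedness. The ansatz has three built-in features. The metric is $\tw$, defined as the Michelsohn square root of $||\W||_{\w}\w^2+\mathfrak{b}+\Theta$, which exists and is positive for $\mathfrak{b}+\Theta$ small by \cite{M82}. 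The bundle metric is $\tilde h=he^{v}$, with the Chern connection $D_{\alpha,v}$. By construction, $d(||\W||_{\tw}\tw^2)=0$ and $\tilde F^{0,2}=0$ hold identically. Hence zeros of $\F_{\mathrm{BC}}$ are exactly solutions of the Hull--Strominger system whose Bott--Chern class is $[||\W||_{\w}\w^2]_{\mathrm{BC}}+\mathfrak{b}$.

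\textbf{Step 1: setup.} First I check that $\F_{\mathrm{BC}}(0,0)=0$. This holds because $\w$ is K\"ahler Ricci-flat, $h$ is HYM, and the $\alpha'$ term vanishes when $\alpha'=0$. Next I fix H\"older completions $C^{k+2,\gamma}$ for $\mathrm{Y}$ and $C^{k,\gamma}$ for the target. I also verify that the first component lands in $\p\bp\W^{1,1}_{\R}$ for all parameters. The term $\bbi\p\bp\tw$ obviously does. The term $\Tr\tilde F^2-\Tr\tilde R^2$ differs from $\Tr F_h^2-\Tr R_{\w}^2$ by $\bbi\p\bp$ of the Bott--Chern secondary forms $C_2$. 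That base difference is $\p\bp$-exact by $c_2(TX)=c_2(E)$ and the $\p\bp$-lemma on the K\"ahler manifold, as in \eqref{eqn:beta}. Smoothness of $\F_{\mathrm{BC}}$ follows from Appendix \ref{App:B}, together with the smooth dependence of the square root and of $\bar{\D}_{\alpha}$ on their inputs.

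\textbf{Step 2: linearization.} At $(0,0)$ the $\alpha'$-terms drop out. The derivative $D_{\mathrm{Y}}\F_{\mathrm{BC}}$ is therefore lower block-triangular, with diagonal entries $L_1^{\mathrm{BC}}$ and $L_2^{\mathrm{BC}}$ as recorded above. I obtain $L_1^{\mathrm{BC}}$ by differentiating the square root, using the $sl_2$ relation $[L_{\w},\Lambda_{\w}]=H$ to invert $\dot{\tw}\mapsto\w\wedge\dot{\tw}$. This gives $L_1^{\mathrm{BC}}=-\frac{1}{2||\W||_{\w}}\Delta$. Since $\Delta$ preserves $\Im(\p\bp)\cap\W^{2,2}_{\R}$ on a K\"ahler manifold (it commutes with $\p,\bp$ and their adjoints) and has no kernel there (harmonic forms are orthogonal to exact ones), it is an isomorphism $C^{k+2,\gamma}\to C^{k,\gamma}$ on this space by Schauder theory. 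The operator $L_2^{\mathrm{BC}}=\bbi\Lambda_{\w}\bar{\D}\D\otimes||\W||_{\w}\w^3/3!$ is invertible from trace-free self-adjoint sections to $V(E)$, as established in the lemma on $L_2$. The kernel of $L_2^{\mathrm{BC}}$ consists only of parallel endomorphisms, and stability of $E$ forces these to be multiples of $\id$. Block triangularity then gives invertibility of $D_{\mathrm{Y}}\F_{\mathrm{BC}}$.

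\textbf{Step 3: conclusion.} The implicit function theorem yields a neighborhood of $0\in\X$ and a smooth map $\mathrm{X}\mapsto\mathrm{Y}(\mathrm{X})$ with $\F_{\mathrm{BC}}(\mathrm{X},\mathrm{Y}(\mathrm{X}))=0$. This produces the claimed family parameterized by $(\mathfrak{b},\alpha)$ for small $\alpha'$. The main obstacle I anticipate is Step 1's range issue: making sure the first component of $\F_{\mathrm{BC}}$ genuinely maps into $\p\bp\W^{1,1}_{\R}$ for nonzero $\alpha$ and $\alpha'$, so that the target is matched to $L_1^{\mathrm{BC}}$. Behind this sits the requirement that the family $\bar{\D}_{\alpha}$ preserves $c_2(E)$ and that the $C_2$ forms depend smoothly on all inputs. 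I would handle this through the canonical secondary-class construction in Appendix \ref{App:B}.
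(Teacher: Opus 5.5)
Your proposal is correct and follows the same argument the paper gives (deferring details to \cite{PW24}): the Michelsohn square-root ansatz, which is automatically conformally balanced; the map $\F_{\mathrm{BC}}$ with $\mathrm{X}=(\alpha',\mathfrak{b},\alpha)$ and $\mathrm{Y}=(\Theta,v)$; the lower block-triangular linearization whose diagonal entries $L_1^{\mathrm{BC}}=-\frac{1}{2\|\W\|_{\w}}\Delta$ and $L_2^{\mathrm{BC}}$ are invertible; and the implicit function theorem. The range check you flag as the main obstacle is easier than you expect: for every $\alpha$, the differences $\Tr\tilde F^2-\Tr F_h^2$ and $\Tr\tilde R^2-\Tr R_{\w}^2$ are $d$-exact $(2,2)$-forms by Chern--Weil, hence $\p\bp$-exact by the $\p\bp$-lemma, so you never need Bott--Chern secondary forms between different holomorphic structures.
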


\subsubsection{Aeppli-Bott-Chern duality} By the Poincar\'e-type duality \eqref{eqn:Poincare-Duality} between the Aeppli cohomology and Bott-Chern cohomology, 
\begin{align}
    H^{1,1}_{\mathrm{A}}(X,\C)\times H^{2,2}_{\mathrm{BC}}(X,\C)&\to \C,\nonumber\\
    (\mathfrak{a},\mathfrak{b})&\mapsto\int_{X}\mathfrak{a}\wedge\mathfrak{b},
\end{align}
the dimension of the Aeppli parameter space matches the dimension of the Bott-Chern parameter space.

This matches our expectation in \cite{PW24} that both Aeppli and Bott-Chern parameters should give equivalent descriptions of the heterotic moduli at a K\"ahler point, i.e. the dimension of the Aeppli parameter space $H^{1,1}_{\mathrm{A}}(X)$ matches the dimension of the Bott-Chern parameter space $H^{2,2}_{\mathrm{BC}}(X)$. In particular, our construction of the Aeppli parameter in this paper does not require the introduction of the auxiliary gauge connection on the tangent bundle. Hence, we see the expected dimension match of the local heterotic moduli near a K\"ahler point in two parametrizations.

\section{Open Questions}\label{sec:5}
In the paper, we have shown the existence of the Aeppli parameter without the auxiliary gauge connection on the tangent bundle, and we have also given a brief parallel comparison between the Aeppli parameter and the Bott-Chern parameter. We end the paper with some immediate open questions and future directions.
\begin{enumerate}
    \item Uniqueness is not discussed in the scope this paper. It would be an open question even in this restricted scenario. One might hope that non-linear PDE analysis could shred some lights on this. Positive conclusion of uniqueness would result the extension of Yau's theorem and DUY theorem to this non-K\"ahler case. 
    \item Since we have shown that the Aeppli parameter and Bott-Chern parameter give equivalent local description on the heterotic moduli, one should find a coordinate transformation between them and could calculate the explicit local metric. It would be interesting to compute the metric in both parameterizations and compare with the results from previous works \cites{CdlOM17,G-FRT20}.
    \item We have constructed the Aeppli parameter dual to the Bott-Chern parameter and also briefly discussed the extension of the Aeppli parameter to $n$-fold Hull-Strominger system. However, it is not straightforward to extend the Bott-Chern parameter construction in \cite{PW24}. By the duality, we conjecture that such a generalization should be possible. How to resolve this imbalance in $n$-fold cases discussed in Appendix \ref{App:C}?
    \item We have ignored the variation of complex structure for simplcity. But how to incorporate the deformation of the complex structure into this argument to conclude the existence and local smoothness of the full heterotic moduli space? 
    \item Arriving at Theorem \ref{thm:joint} requires the assumption that the deformation of the stable bundle $E$ is unobstructed. Then how to probe the neighborhood of a K\"ahler point when the deformation of $E$ is obstructed? This would mean that the moduli space of the stable bundle $E$ is singular, would the heterotic moduli space also be singular? Then how to understand the Aeppli parameter and Bott-Chern parameter in this case?
    \item How to probe away from the K\"ahler point, i.e. when the reference geometry is non-K\"ahler? One need to use more refined non-linear PDE analysis to understand the local structure of the heterotic moduli near a non-K\"ahler point.
    \item Physical arguments \cites{CdlOM17,CdlOM19} suggest the K\"ahler structure on the heterotic moduli. It would be interesting to understand the precise condition mathematically. Furthermore, a recent result in \cite{MY26} gives explicit form for the heterotic moduli space up to order ${\alpha'}^2$ with the K\"ahler potential protected and unchanged. Is the K\"ahler potential protected up to higher order or even in full order? It would be interesting to understand the protected K\"ahler potential non-perturbatively. These questions involve with the global theory of the heterotic moduli, which is unclear and much harder than the local theory.
\end{enumerate}

\appendix
\section{The Second Bott-Chern Characteristics and Smoothness of $\F$}\label{App:B}
Regarding the canonical construction of the Bott-Chern secondary characteristics, we can define it via a $4$th-order self-adjoint elliptic operator called the Kodaira-Spencer operator $\ks$
\begin{align}
    \ks&:\W^{2,2}(X)\to \W^{2,2}(X), \nonumber\\
    \ks&=\p\bp\bpd\pd+\bpd\pd\p\bp+\bpd\p\pd\bp+\pd\bp\bpd\p+\bpd\bp+\pd\p,
\end{align}
where the adjoint $\dagger$ is with respect to $\w$. By the elliptic theory, there exists a unique solution, say $\gamma\in(\ker{\ks})^{\perp}$ solving
\begin{align}
    \ks(\gamma)=\Tr \Tilde{F}_{\Tilde{h}}^{2}-\Tr F_{h}^2,
\end{align}
provided that the right-hand side is perpendicular to $\ker\ks$. A well-known integrating by part argument leads
\begin{align}
    \ker\ks=\{\varphi\in\W^{2,2}X:d\varphi=0,\bpd\pd\varphi=0\}.
\end{align}
Then we may solve for $\gamma$ for some $\rho$ and $\zeta$ that
\begin{align}
    \ks(\gamma)=\bbi\p\bp\rho=d\zeta=\Tr \Tilde{F}_{\Tilde{h}}^{2}-\Tr F_{h}^2,
\end{align}
where the existence of $\zeta$ is by Chern-Weil theory in the last equality and the existence of $\rho$ is by $\p\bp$-lemma in second last equality. Then iterative integration by parts shows this equation implies $d\gamma=0$. Therefore,
\begin{align}
    \p\bp\bpd\pd\gamma=\Tr \Tilde{F}_{\Tilde{h}}^{2}-\Tr F_{h}^2,
\end{align}
we can take
\begin{align}
    C_{2}[\tilde{h},h]:=-\bbi\bpd\pd\gamma.
\end{align}
In other words, canonically,
\begin{align}\label{eqn:C2a}
    C_{2}[\tilde{h},h]=-\bbi\bpd\pd \ks^{-1}\qty(\Tr \Tilde{F}_{\Tilde{h}}^{2}-\Tr F_{h}^2).
\end{align}
Similarly, 
\begin{align}\label{eqn:C2b}
    C_{2}[\tw,\w]=-\bbi\bpd\pd \ks^{-1}\qty(\Tr \Tilde{R}_{\tw}^{2}-\Tr R_{\w}^2)
\end{align}

We also give a brief argument (which can be seen also in \cite{PW24}) to show that $\F$ is a smooth map. In the construction of $\F$, the only non-trivial part is the differences between the second Bott-Chern characteristics in the differentiation $C_{2}[\tilde{A}_{\tilde{h}}+t\alpha,A_h]$ where $\alpha\in \Gamma_{\R}(\End E)$ and $\gamma\in \W^{1,1}_{\R}X$. We will show that they are smooth in $s$ and $t$ respectively. We only give the argument for the first one, as the second one follows similarly. First, given $\tilde{A},A,\alpha\in C^{k+1,\gamma}$, since $\ks^{-1}:C^{k,\gamma}\to C^{k+4,\gamma}$, then by the construction \eqref{eqn:C2a}, we have $C_2[\tilde{A}_{\tilde{h}}+s\alpha,A_h]\in C^{k+2,\gamma}$. Define the linearization of $C_{2}$ at $t=0$ by,
\begin{align}
    L_{\tilde{A}}\alpha=\left. \frac{d}{dt}\right|_{t=0}\Tr\qty(F_{\tilde{A}_{\tilde{h}}+t\alpha}\wedge F_{\tilde{A}_{\tilde{h}}+t\alpha}).
\end{align}
Then we desire the show the vanishing of the differential quotient in the limit of $\alpha\to 0$:
\begin{align}
    \lim_{\alpha\to 0}\frac{||C_{2}[\tilde{A}_{\tilde{h}}+\alpha,A_{h}]-C_{2}[\tilde{A}_{\tilde{h}},A_{h}]-\bpd\pd \ks^{-1}\qty(L_{\tilde{A}}\alpha)||_{C^{k+2,\gamma}}}{||\alpha||_{C^{k+1,\gamma}}}=0.
\end{align}
This is done via \eqref{eqn:C2a}, we have the numerator is given by
\begin{align}
    ||\cdots||_{C^{k+2,\gamma}}=\left|\left|\bpd\pd \ks^{-1}\qty(\Tr\qty(F_{\tilde{A}_{\tilde{h}}+\alpha}\wedge F_{\tilde{A}_{\tilde{h}}+\alpha})-\Tr\qty(F_{\tilde{A}_{\tilde{h}}}\wedge F_{\tilde{A}_{\tilde{h}}})-L_{\tilde{A}}\alpha)\right|\right|_{C^{k+2,\gamma}}
\end{align}
By the standard elliptic estimate for the 4-th order elliptic operator $\ks$, we have
\begin{align}
    ||\xi||_{C^{k+4,\gamma}}\leq C||\ks (\xi)||_{C^{k,\gamma}},\quad \forall \xi\in\ker(\ks)^{\perp}.
\end{align}
Hence, we have a bound on the inverse $\ks^{-1}$ that
\begin{align}
    ||\ks^{-1}(\eta)||_{C^{k+4,\gamma}}\leq C||\eta||_{C^{k,\gamma}},\quad \forall \eta: \xi=\ks^{-1}(\eta)\in \ker(\ks)^{\perp}.
\end{align}
The numerator is then bounded by
\begin{align}
    ||\cdots||_{C^{k+2,\gamma}}&\leq C\left|\left|
        \Tr\qty(F_{\tilde{A}_{\tilde{h}}+\alpha}\wedge F_{\tilde{A}_{\tilde{h}}+\alpha})-\Tr\qty(F_{\tilde{A}_{\tilde{h}}}\wedge F_{\tilde{A}_{\tilde{h}}})-L_{\tilde{A}}\alpha
    \right|\right|_{C^{k,\gamma}},\nonumber\\
    &\leq C\left|\left|
        \int^{1}_{0}\frac{d}{dt}\Tr\qty(F_{\tilde{A}_{\tilde{h}}+t\alpha}\wedge F_{\tilde{A}_{\tilde{h}}+t\alpha})dt - \left.\frac{d}{dt}\right|_{t=0}\Tr\qty(F_{\tilde{A}_{\tilde{h}}+t\alpha}\wedge F_{\tilde{A}_{\tilde{h}}+t\alpha})
    \right|\right|_{C^{k,\gamma}},\nonumber\\
    &\leq C\left|\left|
        \int^{1}_{0}\qty(\Tr F_{\tilde{A}_{\tilde{h}}+t\alpha}d_{\tilde{A}+t\alpha}\alpha-\Tr F_{\tilde{A}_{\tilde{h}}}d_{\tilde{A}}\alpha) dt
    \right|\right|_{C^{k,\gamma}},\nonumber\\
    &= C\left|\left|
        \int^{1}_{0}\int^{1}_{0}\frac{d}{ds}\qty(\Tr F_{\tilde{A}_{\tilde{h}}+st\alpha}d_{\tilde{A}_{\tilde{h}}+st\alpha}\alpha) ds dt
    \right|\right|_{C^{k,\gamma}}.
\end{align}
Then it follows here that the numerator is bounded by $C||\alpha||^2_{C^{k+1,\gamma}}$. Hence the limit follows, and the smoothness of $\F$ is established as the term concerning $C_{2}[\tw,\w]$ follows similarly.

\section{Reflection Formula}\label{App:A}
In this appendix, we record and apply the reflection formula from Huybrechts \cite{H15}. In the local theory, we have the primitive decomposition:
\begin{proposition}
    $(V,\langle \bullet,\bullet \rangle,J)$ Euclidean vector space of $\dim_{\R}(V)=n$. Then
    \begin{align}
        \W^{k}V^{*}=\bigoplus_{i\geq 0}L^{i}(P^{k-2i}).
    \end{align}
\end{proposition}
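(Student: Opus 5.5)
The plan is to prove the Lefschetz primitive decomposition $\W^{k}V^{*}=\bigoplus_{i\geq 0}L^{i}(P^{k-2i})$ entirely from the $sl_2$-representation structure on $\W^{\bullet}V^{*}$ (here $\dim_{\R}V=2n$, so that $J$ makes sense). Let $\w(\bullet,\bullet)=\langle J\bullet,\bullet\rangle$ be the fundamental form. Set $L=\w\wedge\bullet$, let $\Lambda$ be its adjoint with respect to the induced inner product, and let $H=\sum_{k}(k-n)\Pi^{k}$, as in the earlier remark on the $sl_2$ relation. Define the primitive forms by $P^{k}=\ker\Lambda\cap\W^{k}V^{*}$.

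The first step is to verify the commutation relations $[L,\Lambda]=H$, $[H,L]=2L$ and $[H,\Lambda]=-2\Lambda$. The last two are immediate from degree counting. For the first, choose an orthonormal basis $e_1,Je_1,\dots,e_n,Je_n$ with dual coframe $x_j,y_j$, so that $\w=\sum_j x_j\wedge y_j$. Write $L=\sum_j L_j$ and $\Lambda=\sum_j\Lambda_j$, where $L_j$ is wedging with $x_j\wedge y_j$ and $\Lambda_j$ is the corresponding contraction. Then $\W^{\bullet}V^{*}=\bigotimes_j\W^{\bullet}\langle x_j,y_j\rangle$, and the operators for distinct indices commute. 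It therefore suffices to check $[L_j,\Lambda_j]=H_j$ on the $4$-dimensional space $\W^{\bullet}\langle x_j,y_j\rangle$, which is a direct computation on the basis $1,x_j,y_j,x_j\wedge y_j$ with $H_j=(\deg-1)$. This is the one genuinely computational step.

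The second step is to invoke the representation theory of $sl_2(\C)$. Since $\W^{\bullet}V^{*}$ is a finite-dimensional $sl_2$-module, it decomposes into irreducibles. Each irreducible summand is spanned by $v,Lv,\dots,L^{m}v$, where $v$ is a lowest-weight vector, meaning $\Lambda v=0$ and $Hv=-mv$. Lowest-weight vectors are exactly the primitive vectors, and each is homogeneous after splitting into degree components, because $\Lambda$ and $H$ preserve degree up to shift. Hence every $k$-form can be written as $\sum_i L^{i}\alpha_{k-2i}$ with $\alpha_{k-2i}\in P^{k-2i}$. The same analysis shows $P^{j}=0$ for $j>n$, and that $L^{i}$ is injective on $P^{j}$ for $i\leq n-j$. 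The latter holds because the string through a lowest-weight vector of weight $j-n$ has length $n-j+1$.

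The third step is to show the sum is direct. The summands $L^{i}P^{k-2i}$ lie in distinct isotypic strings, which are distinguished by the $H$-eigenvalue of their lowest-weight vectors. Alternatively, one can argue directly: they are mutually orthogonal, because $\langle L^{i}\alpha,L^{j}\beta\rangle=\langle\alpha,\Lambda^{i}L^{j}\beta\rangle$ for $i>j$, and the right-hand side vanishes by the commutation relations together with $\Lambda\beta=0$.

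The main obstacle is the sign and normalization bookkeeping in $[L,\Lambda]=H$ in the first step, since the whole argument rests on that identity; everything after it is formal $sl_2$ theory. Since this decomposition is standard (see Huybrechts), I would present the local two-dimensional computation and then cite the $sl_2$ argument.
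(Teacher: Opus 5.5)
Your proof is correct, and you were right to read the hypothesis as $\dim_{\R}V=2n$: the paper's ``$\dim_{\R}(V)=n$'' is a typo, since $n$ is used as the complex dimension in the reflection formula. The paper gives no proof of its own and simply records the statement from Huybrechts; your argument is essentially the standard proof given there, namely $[L,\Lambda]=H$ via splitting $V$ into orthogonal $J$-invariant planes, then $sl_2$-representation theory, with the orthogonality computation showing the sum is direct (indeed orthogonal).
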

Then for a primitive $k$-form $\alpha\in P^{k}$, we have ``a mysterious but extremely useful'' reflection formula.
\begin{proposition}[Reflection formula]
    For each $\alpha\in P^{k}$, 
    \begin{align}
        \star L^{j}\alpha=(-1)^{\frac{1}{2}k(k+1)}\frac{j!}{(n-k-j)!}L^{n-k-j}J(\alpha),
    \end{align}
    where $J:\W^{*}V_{\C}\to \W^{*}V_{\C}$, the complex structure act on cotangent bundle as $J=\sum_{p,q}\bbi^{p-q}\Pi^{p,q}$, and $\Pi^{p,q}:\W^{*}V_{\C}\to \W^{p,q}V$ the projection to type $(p,q)$.
\end{proposition}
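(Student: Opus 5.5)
The plan is to derive the formula from representation theory plus one explicit test computation. Throughout, I interpret $n$ as the complex dimension, i.e.\ $\dim_{\R}V=2n$, and extend $\star$, $L$, $J$ complex-linearly to $\W^{*}V_{\C}$. Both sides of the reflection formula are linear in $\alpha$, and $P^{k}_{\C}=\bigoplus_{p+q=k}P^{p,q}$. It therefore suffices to prove the identity for $\alpha\in P^{p,q}$, where $J\alpha=\bbi^{p-q}\alpha$.

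\emph{Step 1 (where both sides land).} I will check that $\star L^{j}\alpha$ lies in $L^{n-k-j}P^{p,q}$ for $\alpha\in P^{p,q}$.
\begin{itemize}
\item By type, $\star$ maps $\W^{p+j,q+j}$ to $\W^{n-q-j,\,n-p-j}$, and the latter contains $L^{n-k-j}P^{p,q}$.
\item The operator $\star$ intertwines $L$ and $\Lambda$, namely $\Lambda=\star^{-1}L\star$.
\item Hence $\star$ carries the $sl_{2}$-lowest-weight structure of $L^{j}P^{k}$ to that of $L^{n-k-j}P^{k}$. Using the weight operator $H=\sum_{k}(k-n)\Pi^{k}$ recalled in Section \ref{sec:4}, $\star L^{j}\alpha$ is annihilated by $\Lambda^{n-k-j+1}$ and by $L^{j+1}$.
\item From the primitive decomposition (the preceding Proposition) together with the known weights, $\star L^{j}\alpha$ is therefore $L^{n-k-j}$ applied to a primitive $k$-form.
\end{itemize}

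\emph{Step 2 (Schur's lemma).} The group $U(n)$ acts on $V$ preserving $\langle\cdot,\cdot\rangle$, $J$ and $\w$, so it commutes with $\star$, $L$, $\Lambda$ and $J$. The primitive space $P^{p,q}$ is an irreducible $U(n)$-representation; it is the harmonic traceless part of $\W^{p}\otimes\overline{\W^{q}}$. Consider the two maps
\begin{align*}
\alpha\mapsto (L^{n-k-j})^{-1}\star L^{j}\alpha,\qquad \alpha\mapsto J\alpha,
\end{align*}
both regarded as $U(n)$-equivariant maps $P^{p,q}\to P^{p,q}$. Here $L^{n-k-j}$ is injective on $P^{k}$ by Lefschetz. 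By Schur's lemma they differ by a scalar $c_{p,q,j}$. This reduces the problem to computing a single constant.

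\emph{Step 3 (the test form).} Choose an adapted unitary coframe $dz_{a}=dx_{a}+\bbi dy_{a}$ with $\w=\frac{\bbi}{2}\sum_{a} dz_{a}\wedge d\bar z_{a}$. Take the test form
\begin{align*}
\alpha=dz_{1}\wedge\cdots\wedge dz_{p}\wedge d\bar z_{p+1}\wedge\cdots\wedge d\bar z_{k}.
\end{align*}
It is primitive because it involves each index at most once. Write $\w_{a}=dx_{a}\wedge dy_{a}$. Then
\begin{align*}
L^{j}\alpha=j!\sum_{|I|=j,\ I\cap\{1,\dots,k\}=\emptyset}\alpha\wedge\w_{I}.
\end{align*}
The Hodge star factorizes over the orthogonal splitting into the planes $\langle dx_{a},dy_{a}\rangle$, with a Koszul sign. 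On each plane one has
\begin{align*}
\star 1=\w_{a},\qquad \star\w_{a}=1,\qquad \star dz_{a}=-\bbi\, dz_{a},\qquad \star d\bar z_{a}=\bbi\, d\bar z_{a}.
\end{align*}
Hence $\star(\alpha\wedge\w_{I})=\pm\,\bbi^{q-p}\alpha\wedge\w_{I^{c}}$, where $I^{c}$ is the complement of $I$ in $\{k+1,\dots,n\}$. Summing over $I$ and comparing with
\begin{align*}
L^{n-k-j}\alpha=(n-k-j)!\sum_{|I'|=n-k-j}\alpha\wedge\w_{I'}
\end{align*}
produces the factorial ratio $\frac{j!}{(n-k-j)!}$. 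The powers of $\bbi$ combine with $\bbi^{p-q}$ from $J$, and the overall sign should be $(-1)^{k(k+1)/2}$.

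\emph{Main obstacle.} I expect the hardest step to be this sign bookkeeping in Step 3, namely the Koszul signs from reordering $\alpha\wedge\w_{I}$ against the oriented volume form. I would handle it by working entirely in real coordinates and first treating the case $k=1$, $n=1$ to calibrate the convention for $J$. I would then induct on the number of planes, using $\star(\beta\wedge\gamma)=(-1)^{\deg\gamma(\dim W-\deg\beta)}\star_{W}\beta\wedge\star_{W'}\gamma$ for $V=W\oplus W'$.
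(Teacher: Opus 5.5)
Your Schur-lemma-plus-test-form strategy works, but Step~1 does not establish what it claims, and Step~2 depends on it. Put $\beta=\star L^{j}\alpha$, of degree $2n-k-2j$, and write $\beta=\sum_i L^i\beta_i$ with $\beta_i\in P^{2n-k-2j-2i}$. From $[L,\Lambda]=H$, the conditions $\Lambda^{n-k-j+1}\beta=0$ and $L^{j+1}\beta=0$ are each equivalent to the \emph{same} condition, namely $L^i\beta_i=0$ for $i>n-k-j$. Neither condition rules out components with $i<n-k-j$, i.e.\ primitive parts of degree $>k$. For instance, for $n=2$, $k=0$, $j=1$, every $2$-form satisfies both conditions, yet $LP^{0}=\C\w$. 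So the map $(L^{n-k-j})^{-1}\star L^{j}$ in Step~2 is not yet defined on $P^{p,q}$.

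The missing point is that $\star$ sends the whole $sl_2$-string $\alpha,L\alpha,\dots,L^{n-k}\alpha$ to a string of the same length. Concretely, $L\star=\star\Lambda$ and $\Lambda\star=\star L$. Hence $\gamma:=\star L^{n-k}\alpha$ has degree $k$, type $(p,q)$, and $\Lambda\gamma=\star L^{n-k+1}\alpha=0$, so $\gamma\in P^{p,q}$. Iterating $\Lambda L^{m}\alpha=m(n-k-m+1)L^{m-1}\alpha$ then gives
\[
L^{n-k-j}\gamma=\star\Lambda^{n-k-j}L^{n-k}\alpha=\frac{(n-k)!\,(n-k-j)!}{j!}\,\star L^{j}\alpha .
\]
This places $\star L^{j}\alpha$ in $L^{n-k-j}P^{p,q}$. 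It also reduces the proposition to the single case $j=n-k$, so Schur and the test form are needed only once. Alternatively, apply Schur to isotypic components: $\alpha\mapsto\star L^{j}\alpha$ is a $U(n)$-map from the irreducible $P^{p,q}$ into $\bigoplus_i L^iP^{n-q-j-i,\,n-p-j-i}$, whose summands are pairwise non-isomorphic.

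With that repair the argument is complete, and it differs from the paper, which gives no proof of its own and simply records the formula from Huybrechts \cite{H15}. You were also right to take $\dim_{\R}V=2n$; the $\dim_{\R}V=n$ in the preceding proposition is a slip. Your route replaces an opaque combinatorial identity with a structural reduction plus one computation. Its only imported input is the $U(n)$-irreducibility of $P^{p,q}$.

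The computation in Step~3 does produce the right sign. Take $\w_a=dx_a\wedge dy_a$ and orientation $\w^n/n!$. Only the $k$ one-form factors of $\alpha$ are odd, so the product formula contributes $(-1)^{k(k-1)/2}$, and
\[
\star L^{j}\alpha=(-1)^{\frac{k(k-1)}{2}}\,\bbi^{\,q-p}\,\frac{j!}{(n-k-j)!}\,L^{n-k-j}\alpha .
\]
This is the claimed right-hand side, since $\bbi^{2(q-p)}=(-1)^{k}$ turns $(-1)^{k(k-1)/2}\bbi^{q-p}$ into $(-1)^{k(k+1)/2}\bbi^{p-q}$.
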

Note, a direct computation show that on $\W^{k}X$,
\begin{align}
    J^{2}&=\sum_{p,q}\bbi^{2(p-q)}\Pi^{p,q}
    =\sum_{p,q}(-1)^{p-q}\Pi^{p,q}
    =\begin{cases}
        +\id & $k=p+q$$\text{ even}$,\\
        -\id & $k=p+q$$\text{ odd}$.\\
    \end{cases}
\end{align}
The primitive decomposition extends directly to global theory.
\begin{corollary}
    Let $(X,g)$ be a hermitian manifold, there is a direct sum decomposition,
    \begin{align}
        \W^{k}X=\bigoplus_{i\geq 0}L^{i}(P^{k-2i}X),
    \end{align}
    where $P^{k-2i}X=\ker(\Lambda:\W^{k-2i}X\to \W^{k-2i-2}X)$. This is also compatible with bidegree decomposition $\W^{k}_{\C}X=\bigoplus_{p+q=k}\W^{p,q}X$,
    \begin{align}
        P^{k}_{\C}X=\bigoplus_{p+q=k}P^{p,q}X,
    \end{align}
    where $P^{p,q}X:=P^{p+q}_{\C}X\cap \W^{p,q}X$.
\end{corollary}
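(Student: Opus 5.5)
The statement is the global primitive decomposition on a hermitian manifold $(X,g)$ together with its compatibility with bidegrees. I will deduce it pointwise from the linear-algebra Proposition and then globalize.

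\emph{Pointwise decomposition.} At each $x\in X$, apply the linear Proposition to the Euclidean space $(T_xX,g_x,J_x)$. This gives
\begin{align*}
\W^{k}T^*_xX=\bigoplus_{i\geq0}L^{i}(P^{k-2i}_x),\qquad P^{j}_x=\ker\big(\Lambda_x:\W^{j}T^*_x\to \W^{j-2}T^*_x\big).
\end{align*}
The operators $L=\w\wedge\bullet$ and $\Lambda=L^{*}$ are smooth bundle maps, and together with $H=\sum_k(k-n)\Pi^{k}$ they form an $sl_2$-triple, as recalled in the Remark preceding the Bott--Chern computation. The projection of $\W^{k}$ onto the summand $L^{i}P^{k-2i}$ can therefore be written as a universal noncommutative polynomial in $L$ and $\Lambda$, with coefficients depending only on $n,k,i$. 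One way to obtain it is by inverting the $sl_2$-representation structure. Another is inductive: for $\alpha\in\W^k$ with $k\le n$, write $\alpha=\alpha_0+L\beta$ with $\beta=(L^{*}L)^{-1}\Lambda\alpha$, which is possible because $L$ is injective on $\W^{k-2}$ when $k-2<n$. For $k>n$, use the Hodge star and the isomorphism $L^{k-n}$.

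\emph{Globalizing.} Because these projections are polynomials in smooth bundle endomorphisms, they are themselves smooth bundle endomorphisms. Their pointwise images therefore have constant rank (the dimensions of $P^{j}_x$ are universal), so $P^{k-2i}X$ and $L^{i}P^{k-2i}X$ are smooth subbundles. The sum of these subbundles is direct because it is direct at every point. Taking smooth sections then gives the stated decomposition of $\W^kX$.

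\emph{Bidegree compatibility.} I would first check that $L$ has bidegree $(1,1)$ and $\Lambda$ has bidegree $(-1,-1)$, since $\w$ is a $(1,1)$-form and $g$ is hermitian. Both therefore commute with $\Pi^{p,q}$ up to a shift of type. It follows that $\Lambda\alpha=0$ if and only if $\Lambda\Pi^{p,q}\alpha=0$ for every $(p,q)$, because the components $\Lambda\Pi^{p,q}\alpha$ lie in distinct types. Hence
\begin{align*}
P^{k}_{\C}X=\bigoplus_{p+q=k}\big(P^{k}_{\C}X\cap\W^{p,q}X\big)=\bigoplus_{p+q=k}P^{p,q}X.
\end{align*}

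The main obstacle is only the bookkeeping that the projections are genuinely smooth, that is, that the pointwise decomposition has constant rank. The $sl_2$-polynomial expression for the projectors settles this, so I expect no real difficulty.
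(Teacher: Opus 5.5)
Your proposal is correct and follows the route the paper intends. The paper gives no argument beyond asserting that the pointwise primitive decomposition of the linear-algebra Proposition ``extends directly to global theory''; your fiberwise application, with smoothness of the projectors via the $sl_2$-relations and bidegree compatibility from $L$ and $\Lambda$ having types $(1,1)$ and $(-1,-1)$, supplies the details the paper leaves implicit.
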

The reflection formula extends to global forms as well.

We record the following specialization of the reflection formula in 1- and 2-forms in the general $n$-fold case.
\begin{lemma}[$1$-form reflection formula in $n$-fold case]
    For each $\gamma\in \W^1X$, we have
    \begin{align}
        \star \gamma = -\frac{1}{(n-1)!}L^{n-1}J\gamma.
    \end{align}
    We can also rewrite it as
    \begin{align}\label{eqn:1-form-reflection-n-fold}
        L^{n-1}\gamma = (n-1)!\star J\gamma.
    \end{align}
\end{lemma}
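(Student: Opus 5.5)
We derive this as the special case $j=0$, $k=1$ of the reflection formula stated above. The primitive decomposition gives $\W^1X = P^1X$: since $\Lambda$ lowers degree by two and there are no $(-1)$-forms, $\Lambda\gamma=0$ for every $1$-form. Hence every $\gamma\in\W^1X$ is primitive, and the reflection formula applies directly. Throughout, $n$ is the complex dimension, so the real dimension is $2n$; this is the normalization for which $L^{n-k-j}$ is the correct power in the reflection formula, and we use it in that sense.

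First, set $k=1$ and $j=0$ in
\[
\star L^{j}\alpha=(-1)^{\frac{1}{2}k(k+1)}\frac{j!}{(n-k-j)!}L^{n-k-j}J(\alpha).
\]
The sign is $(-1)^{1}=-1$ and the coefficient is $0!/(n-1)!=1/(n-1)!$. This gives
\[
\star\gamma=-\frac{1}{(n-1)!}L^{n-1}J\gamma,
\]
which is the first claimed identity. The argument is pointwise, and the global version follows because the reflection formula extends to global forms, as recorded above.

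For the rewritten form, we apply the identity to $J\gamma$ in place of $\gamma$. This is legitimate because $J\gamma$ is again a $1$-form and hence primitive. We obtain $\star J\gamma=-\frac{1}{(n-1)!}L^{n-1}J^2\gamma$. On $1$-forms (odd degree) the computation above gives $J^2=-\id$, so $\star J\gamma=\frac{1}{(n-1)!}L^{n-1}\gamma$. Multiplying by $(n-1)!$ yields $L^{n-1}\gamma=(n-1)!\star J\gamma$.

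The only delicate point is bookkeeping: matching the dimension convention in the reflection formula with complex dimension $n$, and tracking the sign $J^2=-1$ in odd degree. There is no analytic obstacle.
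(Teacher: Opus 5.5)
Your proposal is correct and follows essentially the same route as the paper: you note that every $1$-form is primitive, specialize the reflection formula to $k=1$, $j=0$, and then apply the identity to $J\gamma$, using $J^2=-\id$ on $1$-forms, to get the rewritten form. Your remark that $n$ must be the complex dimension (real dimension $2n$) is a useful clarification, since the paper's statement of the local reflection formula writes $\dim_{\R}(V)=n$.
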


\begin{proof}
    Since $\Lambda_{\w}\gamma$ for any $1$-form $\gamma$, hence all $1$-forms are primitive. Then the proof follows the application of the general reflection formula for $1$-forms with $\alpha=\gamma$, $n=n$, $k=1$, $j=0$.
    \begin{align}
        \star \gamma &= (-1)^{\frac{1}{2}\cdot 1\cdot 2}\frac{0!}{(n-1)!}L^{n-1}J(\gamma) =-\frac{1}{(n-1)!}L^{n-1}J(\gamma)
    \end{align}
    Then the rewritten form follows and $J^2=-\id$ on $\W^1X$,
    \begin{align}
        \star J\gamma & = -\frac{1}{(n-1)!}L^{n-1}J^2\gamma = \frac{1}{(n-1)!}L^{n-1}\gamma.
    \end{align}
\end{proof}

\begin{lemma}[$2$-form reflection formula in $n$-fold case]
    For any 2-form $\gamma\in \W^{2}X$, 
    \begin{align}
        \star \gamma = \frac{1}{(n-1)!}(\Lambda_{\w}\gamma)L^{n-1}-\frac{1}{(n-2)!}L^{n-2}J\gamma.
    \end{align}
    We can also rewrite it as
    \begin{align}
        L^{n-2}\gamma = -(n-2)!\star J\gamma + \frac{1}{(n-1)}(\Lambda_{\w}J\gamma)L^{n-1}. \label{eqn:2-form-reflection-n-fold}
    \end{align}
\end{lemma}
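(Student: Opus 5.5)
The plan is to reduce to the primitive case via the Lefschetz decomposition and then apply the reflection formula to each summand separately, exactly as in the proof of the $1$-form version. By the Corollary (primitive decomposition on a hermitian manifold), every $\gamma\in\W^{2}X$ splits as
\begin{align}
    \gamma=\gamma_{0}+f\,\w,\qquad \gamma_{0}\in P^{2}X,\ f\in\W^{0}X .
\end{align}
To find $f$, I apply $\Lambda_{\w}$ and use the $sl_2$ relation $[L_{\w},\Lambda_{\w}]=H$ on functions, which gives $\Lambda_{\w}L_{\w}f=nf$. Hence $f=\frac{1}{n}\Lambda_{\w}\gamma$. Throughout, $L^{n-1}$ written alone means $L^{n-1}(1)=\w^{n-1}$, as in the statement.

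Next I apply the reflection formula term by term. For the primitive part I take $k=2$, $j=0$; the sign is $(-1)^{3}=-1$, so
\begin{align}
    \star\gamma_{0}=-\frac{1}{(n-2)!}L^{n-2}J\gamma_{0}.
\end{align}
For the function part I take $\alpha=f\in P^{0}$, $k=0$, $j=1$. Since $J$ acts trivially on functions,
\begin{align}
    \star(f\w)=\frac{1}{(n-1)!}fL^{n-1}.
\end{align}
I then substitute $J\gamma_{0}=J\gamma-fJ\w=J\gamma-f\w$, using that $\w$ has type $(1,1)$, so $J\w=\bbi^{0}\w=\w$. Collecting the $fL^{n-1}$ terms gives the coefficient
\begin{align}
    \frac{1}{n}\qty(\frac{1}{(n-2)!}+\frac{1}{(n-1)!})=\frac{1}{n}\cdot\frac{(n-1)+1}{(n-1)!}=\frac{1}{(n-1)!}.
\end{align}
This yields the first displayed formula of the lemma.

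For the rewritten form \eqref{eqn:2-form-reflection-n-fold}, I apply the first formula to $J\gamma$ in place of $\gamma$. Two facts are needed here. First, $J^{2}=\id$ on $2$-forms, by the computation recorded before the Corollary. Second, $\Lambda_{\w}$ annihilates the $(2,0)$ and $(0,2)$ parts, so only the $(1,1)$ part contributes to $\Lambda_{\w}J\gamma$; in particular $\Lambda_{\w}J\gamma=\Lambda_{\w}\gamma$, though the statement keeps the form $\Lambda_{\w}J\gamma$. Applying the first formula to $J\gamma$ gives
\begin{align}
    \star J\gamma=\frac{1}{(n-1)!}(\Lambda_{\w}J\gamma)L^{n-1}-\frac{1}{(n-2)!}L^{n-2}\gamma .
\end{align}
Solving for $L^{n-2}\gamma$ and using $\frac{(n-2)!}{(n-1)!}=\frac{1}{n-1}$ gives exactly \eqref{eqn:2-form-reflection-n-fold}.

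The only step needing care is the bookkeeping of the constant $\frac{1}{n}\Lambda_{\w}\gamma$ together with the sign convention in the reflection formula. I will sanity-check the result on $\gamma=\w$ for $n=2$. There $\star\w=\w$, while the right-hand side gives $\frac{1}{1!}\cdot 2\cdot\w-\w=\w$, so the formula passes this check.
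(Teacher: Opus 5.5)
Your proposal is correct and follows the paper's own proof essentially step for step. Like the paper, you use the primitive decomposition $\gamma=\frac{1}{n}(\Lambda_{\w}\gamma)\w+\beta$ with $\beta\in P^{2}X$, apply the reflection formula with $(k,j)=(0,1)$ and $(2,0)$, recollect the $L^{n-1}$ coefficient as $\frac{1}{n!}+\frac{1}{n(n-2)!}=\frac{1}{(n-1)!}$, and obtain the rewritten form by applying the first identity to $J\gamma$ using $J^{2}=\id$ on $2$-forms.
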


\begin{proof}
    Follow the same argument as before. Any $2$-form $\gamma\in \W^2X$ admits a primitive decomposition,
    \begin{align}
        \gamma = \frac{1}{n}(\Lambda_{\w}\gamma)\w + \beta,\quad \beta\in P^2X,
    \end{align}
    where the coefficient $\frac{1}{n}$ arises from the fact that $\Lambda_{\w}\w=n$ in $n$-fold $X$. Then applying the reflection formula to $\w$ and $\beta$ respectively, we have
    \begin{align}
        \star \w &= \frac{1}{(n-1)!}\w^{n-1}\quad (\alpha=1,n=n,k=0,j=1),\\
        \star \beta &= -\frac{1}{(n-2)!}L^{n-2}J\beta\quad (\alpha=\beta,n=n,k=2,j=0).
    \end{align}
    Then collecting the result, we have,
    \begin{align}
        \star \gamma & = \star \qty( \frac{1}{n}(\Lambda_{\w}\gamma)\w +\beta  ),\nonumber\\
        &=\frac{1}{n!}(\Lambda_{\w}\gamma)L_{\w}^{n-1}-\frac{1}{(n-2)!}L_{\w}^{n-2}J\beta,\nonumber\\
        &=\frac{1}{n!}(\Lambda_{\w}\gamma)L_{\w}^{n-1}-\frac{1}{(n-2)!}L_{\w}^{n-2}J\qty( \gamma-\frac{1}{n}(\Lambda_{\w}\gamma)\w ),\nonumber\\
        &=\frac{1}{(n-1)!}(\Lambda_{\w}\gamma)L^{n-1}-\frac{1}{(n-2)!}L^{n-2}J\gamma.
    \end{align}
    Then the rewritten form follows and with $J^2=\id$ on $\W^2X$,
    \begin{align}
        \star J\gamma & = \frac{1}{(n-1)!}(\Lambda_{\w}J\gamma)L^{n-1}-\frac{1}{(n-2)!}L^{n-2}\gamma,
    \end{align}
    rearrange it to obtain the result.
\end{proof}

\section{$n$-Fold case}\label{App:C}
In this appendix, we will briefly remark on the $n$-fold case of the Hull-Strominger system. In particular, we will show that the main theorem is special in $n=2,3$-fold case and our construction of both Bott-Chern and Aeppli parameter to general $n$-fold case is obstructed.

To introduce $n$-fold Hull-Strominger system, let $X$ be a compact complex $n$-fold with a nowhere vanishing holomorphic $(n,0)$-form $\W$, and a hermitian metric $\w$. Let $E\to X$ be a holomorphic vector bundle with the associated hermitian metric $h$, the associated Chern connection $A_{h}$ and the Chern curvature $F_{h}$. The Hull-Strominger system on an $n$-fold reads as
\begin{align}
    d(||\W||_{\w}\w^{n-1})&=0\quad\text{Conformally balanced condition},\\
    F_{h}\wedge \w^{n-1}=0,\ F^{0,2}_{h}&=0\quad\text{Hermitian-Yang-Mills conditions},\\
    \bbi\p\bp\w-\alpha'(\Tr R_{\w}^2 - \Tr F_{h}^{2})&=0\quad \text{Anomaly cancellation condition}. 
\end{align}

\subsection{Aeppli parameter}
The construction of $\F$ is similar in $3$-fold case. 
\begin{align}
    \F(\mathrm{X},\mathrm{Y})=\mqty[
        \tilde{\w}_{u}-\chi\\
        \star d(||\W||_{\chi}\chi^{n-1})\\
        ||\W||_{\chi}e^{\frac{v}{2}}(\chi^{n-1}\wedge \bbi\tilde{F}_{v})e^{-\frac{v}{2}}-C\chi^{n}\otimes \id
    ],
\end{align}
with the normalization constant
\begin{align}
    C=\frac{1}{\mathrm{Rk}(E)}\frac{\int_{X}||\W||_{\chi}\chi^{n-1}\wedge \bbi\tilde{F}_{\tilde{h}}}{\int_{X}||\W||_{\chi}\chi^n},
\end{align}
where the deformations are given by
\begin{align}
    \tilde{\w}_{u}&=\w+u,\\
    \chi&=\w+\mathfrak{a}+\theta(\xi)-\alpha'(C_{2}[\tilde{h},h]-C_{2}[\tw_{u},\w]+\beta),\quad \theta(\xi)=(1+J)d\xi,\\
    \tilde{h}&=he^{v}.
\end{align}
The corresponding Banach spaces are given by
\begin{align}
    \mathcal{X}&=\R\times H^{1,1}_{\mathrm{A}}(X),\\
    \mathcal{Y}&=C^{k,\gamma}(\W^{1,1}_{\R}X)\times C^{k+2,\gamma}(\im(d^{\dagger})\cap \W^{1}_{\R}X)\times C^{k+2,\gamma}(\Gamma_{\R}(\End_0 E)),\\
    \mathcal{Z}&=C^{k,\gamma}(\W^{1,1}_{\R}X)\times C^{k,\gamma}(\im(d^{\dagger})\cap\W^{1}_{\R}X)\times C^{k,\gamma}(V(E)).
\end{align}
Again, the surjectivity of the linearization $D\F$ is given by the bijectivity of the block operator $D_{\mathrm{Y}}\F$ evaluated at the K\"ahler point,
\begin{align}
    \left. D_{\mathrm{Y}}\F \right|_{(\mathrm{X},\mathrm{Y})=(0,0)}\mqty[\dot{u} \\ \dot{\xi} \\ \dot{v}] = \mqty[
        \id_{\W^{1,1}_{\R}X} & L_{4} & 0\\
        0 & L_1 & 0\\
        0 & L_3 & L_2
    ].
\end{align}
This means we need to show the invertibility of $L_1$ and $L_2$ as in the $3$-fold case. The operator $L_1$ can be calculated and shown to be invertible in similar fashion.

Since we follow the similar construction and deformation ansatz, we will need to calculate the linearization of the $\star$-dualized conformally balanced operator $L_1$. We will now see that there is a remainder term as the obstruction of the ellipticity and hence the invertibility of $L_1$ in general $n$-fold case.
\begin{lemma}\label{lemma-C1}
    The linearization of the balanced metric deformation $L_{1}$ is given by
    \begin{align}
        L_{1}\dot{\xi}=(n-1)!||\W||_{\w}\Delta\dot{\xi}.
    \end{align}
    Moreover,
    \begin{align}
        L_{1}\bullet=(n-1)!||\W||_{\w}\Delta\bullet: \im(d^{\dagger})\cap \W^{1}_{\R}X\to \im(d^{\dagger})\cap \W^{1}_{\R}X,
    \end{align}
    is invertible.
\end{lemma}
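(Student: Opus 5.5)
We linearize $\F_2$ at the K\"ahler point in the direction $\dot\xi$ with $\mathfrak{a}=0$, $\alpha'=0$, $u=0$, $v=0$. There $\chi=\w+\theta(\xi)$ with $\theta(\xi)=(1+J)d\xi=2(d\xi)^{1,1}$, and the $C_2$-terms carry a factor $\alpha'$, so they do not contribute. The two pieces of the variation of $\|\W\|_\chi\chi^{n-1}$ are
\begin{align}
    \delta\chi^{n-1}=(n-1)\w^{n-2}\wedge\dot\theta,\qquad \delta\|\W\|_\chi=-\tfrac12\|\W\|_\w\,\Lambda_\w\dot\theta.
\end{align}
The second identity follows from $\|\W\|^2_\chi\,\chi^n/n!=c\,\W\wedge\bar\W$. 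Since $\w$ is Ricci-flat and $\W$ is parallel, $\|\W\|_\w$ is constant. Hence
\begin{align}
    L_1\dot\xi=\|\W\|_\w\star d\Big(-\tfrac12(\Lambda_\w\dot\theta)\w^{n-1}+(n-1)L^{n-2}\dot\theta\Big).
\end{align}

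Next I rewrite $L^{n-2}\dot\theta$ with the $2$-form reflection formula \eqref{eqn:2-form-reflection-n-fold}. Since $\dot\theta$ is of type $(1,1)$ we have $J\dot\theta=\dot\theta$, so
\begin{align}
    (n-1)L^{n-2}\dot\theta=-(n-1)!\star\dot\theta+(\Lambda_\w\dot\theta)\w^{n-1}.
\end{align}
The bracket therefore becomes $\tfrac12(\Lambda_\w\dot\theta)\w^{n-1}-(n-1)!\star\dot\theta$. I treat the two terms separately.

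For the second term, $\star d\star$ on $2$-forms is $\pm d^\dagger$, so it gives $(n-1)!\,d^\dagger\dot\theta=(n-1)!\,d^\dagger(d\dot\xi+Jd\dot\xi)$. Because $\dot\xi\in\Im(d^\dagger)$, we have $d^\dagger\dot\xi=0$, and therefore $d^\dagger d\dot\xi=\Delta\dot\xi$.

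For the first term, $d(\Lambda\dot\theta)\wedge\w^{n-1}$ is converted by the $1$-form reflection formula \eqref{eqn:1-form-reflection-n-fold} into $(n-1)!\star J\,d(\Lambda_\w\dot\theta)$. Here $\Lambda_\w\dot\theta=2\Lambda_\w d\dot\xi$, since $\Lambda$ kills the $(2,0)+(0,2)$ part.

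The key step is to show that the leftover pieces cancel or combine into a multiple of $\Delta\dot\xi$. The leftover pieces are $d^\dagger J d\dot\xi$ and $J d\Lambda_\w d\dot\xi$, up to the sign conventions for $\star$. I would use the K\"ahler identities $[\Lambda,\p]=\bbi\bpd$ and $[\Lambda,\bp]=-\bbi\pd$, equivalently $[\Lambda,d]=-(d^c)^\dagger$ with $d^c=J^{-1}dJ$. With these, $d^\dagger Jd\dot\xi$ can be expressed through $J d^\dagger d J\dot\xi$ and $d\Lambda d\dot\xi$. Then $d^\dagger\dot\xi=0$ and the K\"ahler relation $\Delta_d=2\Delta_{\bp}$, under which $\Delta$ commutes with $J$, collapse everything into a multiple of $\Delta\dot\xi$. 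I expect the net contribution of these terms to double the $d^\dagger d$ term, giving $(n-1)!\,\|\W\|_\w\Delta\dot\xi$ after the factor bookkeeping. For $n=3$ this is consistent with the $3$-fold lemma.

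This sign and constant bookkeeping is the main obstacle. It is also where a genuine remainder, non-elliptic for general $n$, could survive, so I would verify it carefully, first on a flat torus with a Fourier mode $\dot\xi$. That check fixes the signs of the combination and confirms that its symbol is $|k|^2$.

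Invertibility then follows from Hodge theory. $\Delta$ commutes with $d^\dagger$, so it preserves $\Im(d^\dagger)\cap\W^1_\R X$; it preserves reality because $\Delta$ is real. By the Hodge decomposition $\W^1=\mathcal H^1\oplus\Im d\oplus\Im d^\dagger$, $\Delta$ is injective on $\Im(d^\dagger)$, since this space is orthogonal to the harmonic forms $\mathcal H^1$. It is also surjective there: for $\eta\in\Im d^\dagger$ the Green operator gives $G\eta\in\Im d^\dagger$ with $\Delta G\eta=\eta$. Schauder estimates give boundedness of the inverse $C^{k,\gamma}\to C^{k+2,\gamma}$. Since $\|\W\|_\w$ is a nonzero constant, $L_1$ is invertible.
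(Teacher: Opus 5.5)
Your variation formula, your use of the $2$-form reflection formula directly on the $(1,1)$-form $\dot\theta$ (where $J\dot\theta=\dot\theta$), and your invertibility argument are all correct. This is a reasonable alternative to the paper's route. The paper instead rewrites via the operator $T$, converts $dJd\dot\xi=-dd_cJ\dot\xi$, applies the reflection formula to the non-$(1,1)$ form $d_cJ\dot\xi$, and then cancels the two terms involving $(d_c^{\dagger}\dot\xi)\,\w^{n-1}$.

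However, the step that carries the whole content of the lemma is left undone, and your prediction for it is wrong. Collecting your two pieces, using $\star\star=-1$ on $1$-forms and $\Lambda_{\w}\dot\theta=2\Lambda_{\w}d\dot\xi$, gives
\begin{align*}
\|\W\|_{\w}^{-1}L_1\dot\xi=(n-1)!\,\bigl(d^{\dagger}d\dot\xi+d^{\dagger}Jd\dot\xi-Jd\Lambda_{\w}d\dot\xi\bigr).
\end{align*}
Suppose the last two terms ``doubled the $d^{\dagger}d$ term'' as you expect. Then you would get $2(n-1)!\,\|\W\|_{\w}\Delta\dot\xi$, which contradicts both the constant you claim and the $3$-fold lemma. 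So your bookkeeping is internally inconsistent, and deferring to a torus check leaves the decisive identity unproved.

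What actually happens is an exact cancellation, for every $n$. Since $J$ is pointwise unitary, $J^{\dagger}=J^{-1}$. Then $d_c=J^{-1}dJ$ gives $d_c^{\dagger}=J^{-1}d^{\dagger}J$, i.e.\ $d^{\dagger}J=Jd_c^{\dagger}$. Combining this with the K\"ahler identity $d_c^{\dagger}=-[\Lambda_{\w},d]$ gives
\begin{align*}
d^{\dagger}Jd\dot\xi=Jd_c^{\dagger}d\dot\xi=J(d\Lambda_{\w}-\Lambda_{\w}d)d\dot\xi=Jd\Lambda_{\w}d\dot\xi .
\end{align*}
So the leftover terms cancel, and $d^{\dagger}\dot\xi=0$ yields $L_1\dot\xi=(n-1)!\,\|\W\|_{\w}d^{\dagger}d\dot\xi=(n-1)!\,\|\W\|_{\w}\Delta\dot\xi$. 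No $n$-dependent remainder can survive: all the $n$-dependence lives in the reflection constants, which already produced the overall $(n-1)!$. With this identity inserted your argument is complete, and arguably a bit shorter than the paper's.
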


\begin{proof}
    We need to recalculate $\delta_{0}||\W||_{\chi}$ in $n$-fold case. Take the identity
    \begin{align}
        ||\W||_{\chi}^{2}\frac{\chi^n}{n!}=(-1)^{n}(\bbi)^{n}\W\bar{\W},
    \end{align}
    and take the variation at both sides,
    \begin{align}
        0=\delta_{0}\qty(||\W||_{\chi}^{2}\frac{\chi^n}{n!})=2||\W||_{\w}\delta_{0}(||\W||_{\chi})\frac{\w^n}{n!}+\frac{1}{(n-1)!}||\W||_{\w}^2\w^{n-1}\wedge \delta_{0}\chi.
    \end{align}
    This implies that
    \begin{align}
        \delta_{0}||\W||_{\chi}=-\frac{\frac{1}{(n-1)!}||\W||_{\w}^2\w^{n-1}\wedge \dot{\chi}}{\frac{2}{n!}||\W||_{\w}\w^n}=-\frac{n}{2}\frac{||\W||_{\w}\w^{n-1}\wedge \dot{\chi}}{\w^n}.
    \end{align}
    Using the identity that for any $2$-form $\alpha\in \W^{2}X$, 
    \begin{align}
        (\Lambda_{\w}\alpha)\w^n=n\alpha\wedge \w^{n-1}\implies \w^{n-1}\wedge\alpha=\frac{1}{n}(\Lambda_{\w}\alpha)\w^n.
    \end{align}
    We hence obtain that
    \begin{align}
        \delta_{0}||\W||_{\chi}=-\frac{1}{2}||\W||_{\w}(\Lambda_{\w}\dot{\chi}).
    \end{align}
    Therefore, we have
    \begin{align}
        L_{1}\dot{\xi}=\star d\qty(-\frac{1}{2}||\W||_{\w}(\Lambda_{\w}\dot{\chi})\w^{n-1})+(n-1)\star d(||\W||_{\w}\w^{n-2}\wedge \dot{\chi}).
    \end{align}
    Normalize without the constant $||\W||_{\w}$ factor, 
    \begin{align}
        ||\W||_{\w}^{-1}L_{1}\dot{\xi}=\star d\qty(-\frac{1}{2}(\Lambda_{\w}\dot{\chi})\w^{n-1})+(n-1)\star d(\w^{n-2}\wedge \dot{\chi}).
    \end{align}
    We can rearrange the equation into the following form, since $[L_{\w},d]=0$,
    \begin{align}
        ||\W||_{\w}^{-1}L_{1}\dot{\xi}=(n-1)\star\qty(
            \w^{n-2}\wedge d\qty( \dot{\chi}-\frac{1}{2(n-1)}\w(\Lambda_{\w}\dot{\chi}) )
        )=(n-1)\star L_{\w}^{n-2}dT(\dot{\chi}),
    \end{align}
    where now the $T$-operator is defined as,
    \begin{align}
        T(\bullet):=\id\bullet-\frac{1}{2(n-1)}L_{\w}\Lambda_{\w}\bullet.
    \end{align}
    Expand the operator $T$ and expression of $\chi$, we have
    \begin{align}
        \frac{1}{(n-1)}||\W||_{\w}^{-1}L_{1}&=\star L_{\w}^{n-2}dT(1+J)d\dot{\xi},\nonumber\\
        &=\star L_{\w}^{n-2}d(1+J)d\dot{\xi}-\frac{1}{2(n-1)}\star L_{\w}^{n-2}dL_{\w}\Lambda_{\w}(1+J)d\dot{\xi},\nonumber\\
        &=\star L_{\w}^{n-2}d(1+J)d\dot{\xi}-\frac{1}{(n-1)}\star L_{\w}^{n-2}dL_{\w}\Lambda_{\w}\qty(d\dot{\xi})^{1,1},
    \end{align}
    where in the last line, we recall that 
    \begin{align}
        (1+J)d\dot{\xi}=2(d\dot{\xi})^{1,1}.
    \end{align}
    Then we can further rearrange the equation into the following form, since $[L_{\w},d]=0$ and $\Lambda_{\w}(d\dot{\xi})^{1,1}=\Lambda_{\w}d\dot{\xi}$,
    \begin{align}
        \frac{1}{(n-1)}||\W||_{\w}^{-1}L_{1}&=\star L_{\w}^{n-2}dJd\dot{\xi}-\frac{1}{(n-1)}\star L_{\w}^{n-2}dL_{\w}\Lambda_{\w}d\dot{\xi}=\star L_{\w}^{n-2}dJd\dot{\xi}-\frac{1}{(n-1)}\star L_{\w}^{n-1}d\Lambda_{\w}d\dot{\xi}.
    \end{align}
    Again, for the first term, we use the identity that,
    \begin{align}
        J\circ d=-d_{c}\circ J,\quad d_{c}=-\bbi(\p-\bp),
    \end{align}
    and for the second term, we use $d^2=0$ and the K\"ahler identity that,
    \begin{align}
        [\Lambda_{\w},d]=-d_{c}^{\dagger}.
    \end{align}
    We arrive at the following expression,
    \begin{align}
        \frac{1}{(n-1)}||\W||_{\w}^{-1}L_{1}&=-\star L_{\w}^{n-2}dd_{c}J\dot{\xi}+\frac{1}{(n-1)}\star L_{\w}^{n-1}dd_{c}^{\dagger}\dot{\xi}.
    \end{align}
    Switch the position of $d$ and $L_{\w}^{n-2}$ in the first term, we have
    \begin{align}
        \frac{1}{(n-1)}||\W||_{\w}^{-1}L_1&=-\star d L_{\w}^{n-2} d_{c} J\dot{\xi} + \frac{1}{(n-1)}\star L_{\w}^{n-1} d d_{c}^{\dagger} \dot{\xi}.
    \end{align}
    Take 2-form $\gamma = d_{c} J \dot{\xi}$, and apply the reflection formula \eqref{eqn:2-form-reflection-n-fold} (See Appendix. \ref{App:A} for derivations)
    \begin{align}
        L^{n-2}\gamma = -(n-2)!\star J\gamma + \frac{1}{(n-1)}(\Lambda_{\w}J\gamma)L^{n-1}.
    \end{align}
    We then have
    \begin{align}
        L_{\w}^{n-2}d_{c} J \dot{\xi}=-(n-2)!\star Jd_{c}J\dot{\xi}+\frac{1}{(n-1)}(\Lambda_{\w}Jd_{c}J\dot{\xi})L_{\w}^{n-1}.
    \end{align}
    By the identity that $J\circ d_{c}= d\circ J$ and $J^2=-\id$ on $1$-forms, we can further simplify,
    \begin{align}
        L_{\w}^{n-2}d_{c} J \dot{\xi}=-(n-2)!\star dJ^2\dot{\xi}+\frac{1}{(n-1)}(\Lambda_{\w}dJ^2\dot{\xi})L_{\w}^{n-1}=(n-2)!\star d\dot{\xi}-\frac{1}{(n-1)}(\Lambda_{\w}d\dot{\xi})L_{\w}^{n-1}.
    \end{align}
    Then we can apply the K\"ahler identity $[\Lambda_{\w},d]=-d_{c}^{\dagger}$ to the first term again and note that $\Lambda_{\w}\dot{\xi}=0$, we have
    \begin{align}
        L_{\w}^{n-2}d_{c} J \dot{\xi}&=(n-2)!\star d\dot{\xi}+\frac{1}{(n-1)}(d_{c}^{\dagger}\dot{\xi})L_{\w}^{n-1}.
    \end{align}
    Hence, 
    \begin{align}
        \frac{1}{(n-1)}||\W||_{\w}^{-1}L_1&=-\star d\qty( (n-2)!\star d\dot{\xi}+\frac{1}{(n-1)}(d_{c}^{\dagger}\dot{\xi})L_{\w}^{n-1} ) + \frac{1}{(n-1)}\star L_{\w}^{n-1} d d_{c}^{\dagger} \dot{\xi},\nonumber\\
        &=-(n-2)!\star d\star d\dot{\xi}=(n-2)!d^{\dagger} d\dot{\xi}=(n-2)!\Delta\dot{\xi}.
    \end{align}
    Hence, rearranging the constant factor, we have
    \begin{align}
        L_1\dot{\xi}=(n-1)!||\W||_{\w}\Delta\dot{\xi}.
    \end{align}
    Then by standard elliptic theory and Hodge decomposition argument, we can conclude that $L_1:\im(d^{\dagger})\cap \W^{1}_{\R}X\to \im(d^{\dagger})\cap \W^{1}_{\R}X$ is bijective and hence invertible.
\end{proof}

For $L_2$, the extension to $n$-fold case follows easily that
\begin{align}
    L_{2}\bullet&=\bbi\Lambda_{\w}\bar{\D}\D\bullet\otimes ||\W||_{\w}\frac{\w^n}{n!},
\end{align}
and the invertibility follow similar as in $3$-fold case in \cite{PW24}.

Combination of the above discussion and the implicit function theorem, we can extend the main results to the $n$-fold Hull-Strominger system. 

\subsection{Bott-Chern parameter}
There is an imbalance in whether one can extend the construction of the moduli parameter to $n$-fold between the Aeppli and Bott-Chern cases. We have shown that the Aeppli parameter can be naturally extended to $n$-fold case. However, this is not obvious for the Bott-Chern parameter. In particular, the linearized operator $L_{1}^{\mathrm{BC}}$ will not be Laplacian unless $n=3$.

To expand on the Bott-Chern parameter briefly, we can follow the calculation in \cite{PW24} for $L_{1}^{\mathrm{BC}}$, and use the following lemma which can be easily seen by induction on $k$.
\begin{lemma}\label{lemma-C2}
    We have
    \begin{align}\label{eqn:Lambda-L-k}
        \Lambda_{\w}L_{\w}^{k}=L_{\w}^{k}\Lambda_{\w}-kL_{\w}^{k-1}(H+k-1).
    \end{align}
\end{lemma}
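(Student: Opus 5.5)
The identity follows by induction on $k$, using only the $sl_2$ relations recalled in the Remark on the derivation of $L_1^{\mathrm{BC}}$: $[L_{\w},\Lambda_{\w}]=H$ with $H=\sum_{j}(j-n)\Pi^{j}$. These are pointwise, purely algebraic identities, so the argument needs no analysis.

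\textbf{Preliminary commutation relation.} Since $L_{\w}$ maps $\W^{j}X$ to $\W^{j+2}X$, for $\alpha\in\W^{j}X$ we have
\begin{align}
HL_{\w}\alpha=(j+2-n)L_{\w}\alpha=L_{\w}(H+2)\alpha .
\end{align}
Hence $HL_{\w}=L_{\w}(H+2)$, and more generally $f(H)L_{\w}=L_{\w}f(H+2)$ for any polynomial $f$.

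\textbf{Base cases.} For $k=0$ the statement is $\Lambda_{\w}=\Lambda_{\w}$, where the second term vanishes because of the factor $k$. For $k=1$ it reads $\Lambda_{\w}L_{\w}=L_{\w}\Lambda_{\w}-H$, which is exactly $[L_{\w},\Lambda_{\w}]=H$.

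\textbf{Inductive step.} Assume \eqref{eqn:Lambda-L-k} holds for some $k\ge 1$. Write $\Lambda_{\w}L_{\w}^{k+1}=(\Lambda_{\w}L_{\w}^{k})L_{\w}$ and substitute the inductive hypothesis:
\begin{align}
\Lambda_{\w}L_{\w}^{k+1}=L_{\w}^{k}\Lambda_{\w}L_{\w}-kL_{\w}^{k-1}(H+k-1)L_{\w}.
\end{align}

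In the first term, apply the base case $\Lambda_{\w}L_{\w}=L_{\w}\Lambda_{\w}-H$. This gives
\begin{align}
L_{\w}^{k}\Lambda_{\w}L_{\w}=L_{\w}^{k+1}\Lambda_{\w}-L_{\w}^{k}H.
\end{align}

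In the second term, move $L_{\w}$ to the left using the preliminary relation:
\begin{align}
(H+k-1)L_{\w}=L_{\w}(H+k+1).
\end{align}
The second term therefore becomes $-kL_{\w}^{k}(H+k+1)$.

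Collecting both contributions,
\begin{align}
\Lambda_{\w}L_{\w}^{k+1}=L_{\w}^{k+1}\Lambda_{\w}-L_{\w}^{k}\big((k+1)H+k(k+1)\big)=L_{\w}^{k+1}\Lambda_{\w}-(k+1)L_{\w}^{k}(H+k),
\end{align}
which is \eqref{eqn:Lambda-L-k} with $k$ replaced by $k+1$.

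The only step needing care is the shift $HL_{\w}=L_{\w}(H+2)$. One must make sure the grading convention for $H$ used in the Remark is the one applied here, i.e. $H$ acts on $\W^{j}X$ by $j-n$. Under that convention the constants combine to exactly $k(k+1)$ and the induction closes.
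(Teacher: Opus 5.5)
Your proof is correct and follows the paper's approach: the paper only remarks that the lemma ``can be easily seen by induction on $k$'' from $[L_{\w},\Lambda_{\w}]=H$, and your induction supplies the details. The shift identity $HL_{\w}=L_{\w}(H+2)$ and the collection of constants into $(k+1)L_{\w}^{k}(H+k)$ both check out.
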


As a result, we obtain for the general $n$-fold, the variation of balanced metric reads
\begin{align}
    \delta_{0}(\tw^{n-2})&=\frac{1}{(n^2-4n+5)}||\W||_{\w}^{-1}\Lambda_{\w}\delta_0\qty(||\W||_{\tw}\tw^{n-1}),\nonumber\\
    &=\frac{1}{(n^2-4n+5)}||\W||_{\w}^{-1}\Lambda_{\w}\dot{\Theta}.
\end{align}
It is clear to see that when $n=3$, we have
    \begin{align}
        \delta_0\tw&=\frac{1}{2}||\W||_{\w}^{-1}\Lambda_{\w}\delta_{0}\qty(||\W||_{\tw}\tw^{2})
    \end{align}
    Then
    \begin{align}
        L_{1}^{\mathrm{BC}}\dot{\Theta}&=\delta_0\qty(\bbi\p\bp\tw)=\bbi\p\bp\delta_0\tw=\frac{\bbi}{2}\p\bp\qty(||\W||_{\w}^{-1}\Lambda_{\w}\delta_{0}\qty(||\W||_{\tw}\tw^{2})),\nonumber\\
        &=\frac{\bbi}{2||\W||_{\w}}\p\bp\Lambda_{\w}\dot{\Theta}=-\frac{1}{2||\W||_{\w}}\p\pd_{\w}\dot{\Theta}=-\frac{1}{2||\W||_{\w}}\Delta_{\w}\dot{\Theta}
    \end{align}
    where we have used the K\"ahler identity $[\Lambda_{\w},\bp]=-\bbi\pd$.
But, this will not work for the general $n\neq 3$, and we do not have the Laplacian.

Nevertheless, this variation result seems to suggest we should generalize the anomaly cancellation condition to $n$-fold case as
\begin{align}
    \bbi\p\bp\tw^{n-2}-\O(\alpha')=0.
\end{align}
Then the Bott-Chern parameter holds and extends to this generalized $n$-fold Hull-Strominger system, and the linearization operator returns to the Laplcian. If so, on the dual side, the corresponding Aeppli parameter is not clear for the extended anomaly cancellation condition. Moreover, if this would be a valid generalization, the precise form of the $\O(\alpha')$ is unclear and the physical meaning of this generalized anomaly cancellation condition is vague. It is worth noting that the discussion of the anomaly flow in general $n$-fold case in \cite{P24} also arrive such similar generalization, 
\begin{align}
    \p_{t}\qty(||\W||_{\w(t)}\w(t)^{n-2})=\bbi\p\bp\w(t)^{n-2}.
\end{align}
It will be interesting to investigate the analytic property of such a generalized anomaly flow.

For our purpose, this begs the question whether the Bott-Chern parameter can be generalized to $n$-fold case or not. Is there any natural generalization of the Hull-Strominger system to general $n$-fold case? Since we have already shown that the Aeppli parameter generalizes natural, then by the duality argument, we expect that such a generalization should exist, however the explicit construction is not clear yet. We will leave it as an open question for future work.


\begin{thebibliography}{99}

\bibitem{AGS14} L. B. Anderson, J. Gray, and E. Sharpe, {\em Algebroids, heterotic moduli spaces and the Strominger system}, Journal of High Energy Physics 2014, no. 7 (2014), 1-40.

\bibitem{AG12}B. Andreas, and M. G-. Fernandez. {\em Solutions of the Strominger system via stable bundles on Calabi-Yau threefolds}. Communications in Mathematical Physics 315 (2012): 153-168.

\bibitem{AdlOMS-CS18} A. Ashmore, X. de la Ossa, R. Minasian, C. Strickland-Constable, and E.E. Svanes, {\em Finite deformations from a heterotic superpotential: holomorphic Chern-Simons and an $L_{\infty}$ algebra}, Journal of High Energy Physics 2018, no. 10 (2018), 1-60.

\bibitem{CdlOM17} P. Candelas, X. de la Ossa, and J McOrist, {\em A metric for heterotic moduli}, Communications in Mathematical Physics 356, no. 2 (2017), 567-612.

\bibitem{CdlOM19} P. Candelas, X. de La Ossa, J. McOrist, and R. Sisca. {\em The universal geometry of heterotic vacua}, Journal of High Energy Physics 2019, no. 2 (2019), 1-47.

\bibitem{CHSW85} P. Candelas, G. Horowitz, A. Strominger, and E. Witten, {\em Vacuum configurations for superstrings}, Nuclear Phys. B 258 (1985), no. 1, 46–74.

\bibitem{CPY22}T. C. Collins, S. Picard, and S. -T. Yau, {\em The Strominger system in the square of a K\"ahler class}, arXiv preprint arXiv:2211.03784 (2022).

\bibitem{dlOS14a}X. de la Ossa, and E. E. Svanes, {\em Holomorphic bundles and the moduli space of $\mathcal{N}=1$ supersymmetric heterotic compactifications}. Journal of High Energy Physics, 2014(10), pp.1-55.

\bibitem{D85} S.K. Donaldson, {\em Anti self-dual Yang-Mills connections over complex algebraic surfaces and stable vector bundles}, Proc. London Math. Soc. (3) 50 (1985), no.1, 1-26.

\bibitem{FHP17} T. Fei, Z. Huang, S. Picard, {\em A construction of infinitely many solutions to the Strominger system}, Journal of Differential Geometry 117(1), 23–39.

\bibitem{FY08} J. Fu, and S.-T Yau, {\em The theory of superstring with flux on non-Kahler manifolds and the complex Monge-Ampere equation}, J. Differential Geom. 78 (2008), no. 3, 369–428.

\bibitem{G-FRST22} M. Garcia-Fernandez, R. Rubio, C. Shahbazi, C. Tipler, {\em Canonical metrics on holomorphic Courant algebroids}, Proc. London Math. Soc. 125 (3) (2022) 700-758.

\bibitem{G-FRT17} M. Garcia-Fernandez, R, Rubio, C. Tipler, {\em Infinitesimal moduli for the Strominger system and Killing spinors in generalized geometry}, Mathematische Annalen 369 (2017) 539-595.

\bibitem{G-FRT20} M. Garcia-Fernandez, R. Rubio, C. Tipler, {\em Holomorphic string algebroids}, Trans. Amer. Math. Soc. 373 (2020) 7347-7382.

\bibitem{G-FRT24} M. Garcia-Fernandez, R. Rubio, C. Tipler, {\em Gauge theory for string algebroids}, Journal of Differential Geometry 128 (1) (2024) 77-152.

\bibitem{H86}C. M. Hull. {\em Compactifications of the heterotic superstring}. Physics Letters B 178, no. 4 (1986): 357-364.

\bibitem{H15}D. Huybrechts. Complex geometry: An Introduction. Vol. 78. Berlin: Springer, 2005.

\bibitem{IP13} S. Ivanov and G. Papadopoulos, {\em Vanishing theorems on $(l|k)$-strong Kaehler manifolds with torsion}, Adv. Math. 237 (2013), 147-164.

\bibitem{KS60} K. Kodaira and D.C. Spencer, {\em On Deformations of Complex Analytic Structures, III. Stability Theorems for Complex Structures}, Annals of Mathematics 71(1) (1960), p.43-76.

\bibitem{LY05} J. Li and S.T. Yau, {\em The existence of supersymmetric string theory with torsion}, J. Diff. Geom. {\bf 70} no. 1 (2005), 143-181.

\bibitem{MS12} D. McDuff and D. Salamon, {\em J-holomorphic curves and symplectic topology}, American Mathematical Society Colloquium Publications, Vol. 52, American Mathematical Society, 2012.

\bibitem{MP25} J. McOrist, and S. Picard, 2025 {\em Stringy Corrections to Heterotic SU (3)-Geometry}, arXiv preprint arXiv:2507.02388.

\bibitem{MY26} J. McOrist and Q. Yin {\em Heterotic moduli, the double extension and the ${\alpha'}^2$ metric}, to appear on ArXiV.

\bibitem{M82} M.L. Michelsohn, {\em On the existence of special metrics in complex geometry}, Acta Math. {\bf 149}, (1982) 261--295.

\bibitem{P24} S. Picard, 2024 {\em The Strominger System and Flows by the Ricci Tensor}, preprint arXiv:2402.17770.

\bibitem{PW24} S. Picard, and P.-L. Wu, 2024 {\em Balanced and Aeppli Parameters for the Heterotic Moduli}, preprint arXiv:2401.05331.

\bibitem{S86} A. Strominger, {\em Superstrings with torsion}, Nuclear Phys. B 274 (1986), no. 2, 253–284.

\bibitem{UY86} K. Uhlenbeck and S.T. Yau {\em On the existence of Hermitian-Yang-Mills connections in stable vector bundles}, Comm. Pure Appl. Math. {\bf 39-S} (1986), 257--293; {\bf 42} (1989), 703--707. 

\bibitem{WW87} L. Witten and E. Witten, {\em Large radius expansion of superstring compactifications}, Nuclear Physics B 281, no. 1-2 (1987), 109-126.

\bibitem{Y78} S.-T. Yau, {\em On the Ricci curvature of a compact K\"ahler manifold and the complex Monge-Amp\`ere equation.} I, Comm. Pure Appl. Math. 31 (1978) 339-411.

\end{thebibliography}
\end{document}